\newcommand{\euscr}[1]{\EuScript{#1}} 
\newcommand{\acat}{\euscr{A}} 
\newcommand{\ccat}{\euscr{C}} 
\newcommand{\map}{\textnormal{map}} 
\newcommand{\spaces}{\euscr{S}} 
\newcommand{\ZZ}{\mathbf{Z}}
\newcommand{\HKR}{\mathrm{HKR}}
\newcommand{\spectra}{\euscr{S}p} 
\newcommand{\nocontentsline}[3]{}
\newcommand{\tocless}[2]{\bgroup\let\addcontentsline=\nocontentsline#1{#2}\egroup}
\theoremstyle{plain}
\newtheorem{theorem}{Theorem}[section]
\newtheorem{lemma}[theorem]{Lemma}
\newtheorem{proposition}[theorem]{Proposition}
\newtheorem{corollary}[theorem]{Corollary}
\theoremstyle{definition}
\newtheorem{example}[theorem]{Example}
\newtheorem{recollection}[theorem]{Recollection}
\newtheorem{definition}[theorem]{Definition}
\newtheorem{remark}[theorem]{Remark}
\newtheorem{notation}[theorem]{Notation}
\newtheorem{construction}[theorem]{Construction}
\newtheorem*{remark*}{Remark}
\newtheorem*{terminology*}{Terminology}
\newtheorem*{interpretation*}{Interpretation}
\newtheorem*{definition*}{Definition}
\newtheorem*{conjecture*}{Conjecture}
\newtheorem*{notation*}{Notation}
\newtheorem*{convention*}{Convention}
\theoremstyle{remark}
\numberwithin{equation}{section}
  \def\subsection{\@startsection{subsection}{1}%
  \z@{.7\linespacing\@plus\linespacing}{.5\linespacing}%
  {\normalfont\bfseries\centering}}
\let\oldtocsection=\tocsection
\let\oldtocsubsection=\tocsubsection
\let\oldtocsubsubsection=\tocsubsubsection
\renewcommand{\tocsection}[2]{\hspace{0em}\oldtocsection{#1}{#2}}
\renewcommand{\tocsubsection}[2]{\hspace{1em}\oldtocsubsection{#1}{#2}}
\renewcommand{\tocsubsubsection}[2]{\hspace{2em}\oldtocsubsubsection{#1}{#2}}
\newcommand{\forgetful}{F}
\DeclareMathOperator{\THH}{THH}
\DeclareMathOperator{\TC}{TC}
\DeclareMathOperator{\sq}{sq}
\mathchardef\mhyphen="2D
\newcommand{\Ab}{\mathbb{A}}
\newcommand{\Fb}{\mathbb{F}}
\newcommand{\Pb}{\mathbb{P}}
\newcommand{\Zb}{\mathbb{Z}}
\newcommand{\Ac}{\mathcal{A}}
\newcommand{\Cc}{\mathcal{C}}
\newcommand{\Dc}{\mathcal{D}}
\newcommand{\Fc}{\mathcal{F}}
\newcommand{\Lc}{\mathcal{L}}
\newcommand{\Oc}{\mathcal{O}}
\newcommand{\Pc}{\mathcal{P}}
\newcommand{\Abf}{\mathbf{A}}
\newcommand{\Mbf}{\mathbf{M}}
\newcommand{\Pbf}{\mathbf{P}}
\newcommand{\Tbf}{\mathbf{T}}
\newcommand{\Zbf}{\mathbf{Z}}
\newcommand{\tr}{\mathrm{Tr}}
\newcommand{\op}{\mathrm{op}}
\newcommand{\Hom}{\mathrm{Hom}}
\newcommand{\Spec}{\mathrm{Spec}}
\newcommand{\Gr}{\mathrm{Gr}}
\newcommand{\QCoh}{\mathrm{QCoh}}
\newcommand{\Fun}{\mathrm{Fun}}
\newcommand{\hook}{\hookrightarrow}
\newcommand{\red}{\mathrm{red}}
\newcommand{\xto}{\xrightarrow}
\newcommand{\Map}{\mathrm{Map}}
\newcommand{\MGL}{\mathrm{MGL}}
\newcommand{\tto}{\twoheadrightarrow}
\newcommand{\MS}{\mathrm{MS}}
\newcommand{\SH}{\mathrm{SH}}
\newcommand{\Mod}{\mathrm{Mod}}
\newcommand{\Sm}{\mathrm{Sm}}
\newcommand{\sm}{\mathrm{sm}}
\newcommand{\mot}{\mathrm{mot}}
\newcommand{\Pure}{\mathrm{Pure}}
\newcommand{\DM}{\mathrm{DM}}
\newcommand{\Sp}{\mathrm{Sp}}
\newcommand{\Nis}{\mathrm{Nis}}
\newcommand{\res}{\mathrm{res}}
\newcommand{\fin}{\mathrm{fin}}
\newcommand{\exa}{\mathrm{ex}}
\newcommand{\crys}{\mathrm{crys}}
\newcommand{\kgl}{\mathrm{kgl}}
\newcommand{\KGL}{\mathrm{KGL}}
\newcommand{\Fil}{\mathrm{Fil}}
\DeclareSymbolFontAlphabet{\mathbb}{AMSb} 
\DeclareSymbolFontAlphabet{\mathbbl}{bbold}
\newcommand{\prism}{{\mathbbl{\Delta}}}
\renewcommand{\res}{\mathrm{res}}
\newcommand{\Dec}{\mathrm{Dec}}
\newcommand{\heart}{\heartsuit}
\newcommand{\D}{\mathrm{D}}
\newcommand{\R}{\mathrm{R}}
\newcommand{\Hod}{\mathrm{Hdg}}
\newcommand{\DF}{\mathrm{DF}}
\newcommand{\tcofib}{\mathrm{tcofib}}
\newcommand{\tfib}{\mathrm{tfib}}
\newcommand{\SNCD}{\mathrm{SNCD}}
\newcommand{\Tw}{\mathrm{Tw}}
\newcommand{\CAlg}{\mathrm{CAlg}}
\newcommand{\up}{\uparrow}
\newcommand{\down}{\downarrow}
\newcommand{\DGr}{\mathrm{DGr}}
\newcommand{\Sch}{\mathrm{Sch}}
\newcommand{\cdh}{\mathrm{cdh}}
\newcommand{\sing}{\mathrm{sing}}
\renewcommand{\log}{\mathrm{log}}
\newcommand{\dR}{\mathrm{dR}}
\newtheorem{theo}{Theorem}
\theoremstyle{plain}
\newtheorem*{thm*}{Theorem}
\newtheorem*{lem*}{Lemma}
\newtheorem*{prop*}{Proposition}
\newtheorem*{cor*}{Corollary}
\theoremstyle{definition}
\newtheorem{cons}[theo]{Construction}
\newtheorem{quest}[theo]{Question}
\begin{document}

\title{A note on weight filtrations at the characteristic 
}
\author{Toni Annala}
\author{Piotr Pstr\k{a}gowski}

\begin{abstract}
We show that $\kgl$-linear cohomology theories over an affine Dedekind scheme $S$ admit a canonical weight filtration on resolvable motives without inverting residual characteristics. Combined with upcoming work of Annala--Hoyois--Iwasa, this endows essentially all known logarithmic cohomology theories with weight filtrations when evaluated on projective sncd pairs $(X,D)$ over $S$. Furthermore, the weight-filtered cohomology is an invariant of the open part $U = X-D$. 

On variants of de Rham cohomology, we show that our weight filtration recovers the décalaged pole-order filtration defined by Deligne. One interpretation of this is that the spectral sequence associated to the pole-order filtration is an invariant of $U$ from the $E_2$-page onwards, which generalizes a result of Deligne from characteristic 0 to positive and mixed characteristic, and suggests that ``mixed Hodge theory'' is a useful invariant of $S$-schemes. 

Finally, we compute explicit examples of weight filtered pieces of cohomology theories. One of the computations reproves a slight weakening of a result of Thuillier stating that the singular cohomology of the dual complex associated to the boundary divisor of a good projective compactification does not depend on the chosen  compactification.


In the appendix, we prove the folklore results that the Whitehead tower functor is fully faithful and that perfect bivariant pairings with respect to the twisted arrow category correspond to duality. 
\end{abstract}


\maketitle

\tableofcontents

\section{Introduction} 

The purpose of this article is to make the observation that various $\kgl$-linear cohomology theories carry a canonical weight filtration when restricted to what we call resolvable motives without inverting any of the residual characteristics. This is especially interesting when the so called logarithmic motive associated to a relative snc divisor in a smooth projective $\mathbf{Z}_{p}$-scheme (\textit{projective sncd pair over $\Zbf_p$}) is evaluated on the various cohomology theories coming from $p$-adic Hodge theory, such as algebraic de Rham cohomology, prismatic or syntomic cohomology, or localizing invariants such as topological cyclic homology. Interpreting the cohomology of a logarithmic motive as the corresponding logarithmic cohomology, our methods produce weight filtrations on logarithmic cohomology of projective relative sncd pairs in great generality. The motivic point of view also reveals that the weight-filtered logarithmic cohomology of $(X,D)$ is an invariant of the \emph{open part} $U = X-D$.




The construction is closely related to the work of Bondarko, who defined a weight structure on the category of Voevodsky's motives over a field and showed that this implies the existence of a canonical filtration on the values of any homological functor \cite{bondarko2010weight}, later refined to a functor valued in an appropriate filtered derived $\infty$-category in \cite{haine-pstragowski}. Here, we generalize this approach to the case of $\kgl$-linear cohomology theories defined on smooth schemes over Dedekind domains, where $\kgl$ is the effective $K$-theory spectrum. Many cohomology theories that occur in practice can be represented in this context, in particular:
\begin{enumerate}
    \item cohomology theories linear over motivic cohomology, such as motivic cohomology itself, syntomic cohomology, de Rham cohomology, Hodge cohomology, prismatic cohomology;
    \item localizing invariants, such as $K$-theory, $\THH$, $\TC$;
    \item filtered variants of the above, such as de Rham cohomology equipped with its Hodge filtration, and $\TC$ equipped by its motivic filtration. 
\end{enumerate}
An example of an important cohomology theory that is not $\kgl$-linear, and to which our methods therefore do not apply to, is given by algebraic cobordism $\mathrm{MGL}$.


\subsection{Statement of results} 

We write $S$ for an affine Dedekind scheme. We denote by $\DM^{\kgl}_S \colonequals \Mod_\kgl(\MS_S)$ the category $\kgl$-modules in motivic spectra of  \cite{AHI} and call it the category of \emph{$\kgl$-motives}. Its $\Abf^1$-invariant counterpart $\DM^{\kgl,\Abf^1}_S$ is defined analogously, but using Voevodsky's stable $\mathbf{A}^{1}$-homotopy category $\SH_{S} \subseteq \MS_{S}$ instead.  We say that a motive is
\begin{enumerate}
    \item \emph{perfect pure} (cf. \cite[Definition 3.1.1]{haine-pstragowski}) if it belongs to the smallest subcategory
    \[
    \Pure^\kgl_S \subseteq \DM^\kgl_S
    \]
    generated under extensions and retracts by Tate twists $\Mbf_{\kgl}(X)(n)$ of motives of smooth projective $S$-schemes $X$, where $n \in \mathbf{Z}$,\footnote{Note that since $\kgl$ is orientable, it is not necessary to allow general Thom spectra in the definition of a perfect pure motive, as they are all isomorphic to Tate twists.} 
    \item \emph{resolvable} if it belongs to the subcategory 
    \[
    \DM^\kgl_{S,\res} \subseteq \DM^{\kgl}_{S}
    \]
    generated by perfect pure motives under finite limits, colimits and retracts.
\end{enumerate}

Resolvable motives are dualizable by Atiyah duality \cite[Corollary~5.15]{AHI:atiyah}, and  therefore $\Abf^1$-invariant \cite[Proof of Proposition~6.21]{AHI:atiyah}. It follows that we have a fully faithful inclusion 
\begin{equation}
    \DM_{S,\res}^\kgl \subset \DM_{S}^{\kgl, \Abf^1}.
\end{equation}
Thus, we may regard resolvable motives as objects of either $\DM_{S}^{\kgl}$ or $\DM_{S}^{\kgl, \Ab^1}$, whichever is more convenient for the situation at hand.

The main technical observation of our paper is the following result. Throughout the article, $\Fil_\up(\Cc)$ stands for the category of objects of $\Cc$ equipped with an increasing filtration.

\begin{theorem}
\label{thm:IndependenceOfWts}
Let $S$ be an affine Dedekind scheme, $\Cc$ be a stable $\infty$-category with a $t$-structure $\tau$, and let $E \colon \DM^\kgl_S \to \Cc$ be an exact functor. Then it admits a unique enhancement to an exact functor taking values in filtered objects
\[
W_{\ast} E \colon \DM^\kgl_{S,\res} \to \Fil_\up(\Cc),
\]
called the \emph{$\tau$-weight filtration}, such that 
\[
(W_{\ast} E)(M) \simeq \tau_{\geq -\ast}(E(M))
\]
for all perfect pure $M$.
\end{theorem}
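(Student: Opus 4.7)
The plan is to exhibit $\Pure^\kgl_S$ as the heart of a canonical weight structure on $\DM^\kgl_{S,\res}$ and then invoke the universal filtered enhancement associated to a weight structure developed in \cite{haine-pstragowski}. This is a $\kgl$-linear, Dedekind-base variant of Bondarko's Chow weight structure \cite{bondarko2010weight}, where the role of the category of Chow motives is played by perfect pure motives.

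First, I would verify the \emph{Hom-negativity} of the generating family: for any smooth projective $S$-schemes $X, Y$ and integers $n, m$, the mapping spectrum
\[
\Map_{\DM^\kgl_S}\bigl(\Mbf_\kgl(X)(n),\ \Mbf_\kgl(Y)(m)\bigr)
\]
is connective. By the $\infty$-categorical version of Bondarko's generation criterion, this is precisely what is needed to equip the stable, idempotent-complete subcategory generated by $\Pure^\kgl_S$ with a weight structure whose heart is $\Pure^\kgl_S$; this subcategory is exactly $\DM^\kgl_{S,\res}$ by definition. Using the Atiyah duality of \cite[Corollary~5.15]{AHI:atiyah} to dualize $\Mbf_\kgl(X)$, the above mapping spectrum is identified with a $\kgl$-cohomology group of $X \times_S Y$ in a bidegree where connectivity holds because $\kgl$ is the effective cover of $\KGL$.

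Second, once the weight structure is in place, the construction of \cite{haine-pstragowski} applied to our target stable $\infty$-category $\Cc$ with $t$-structure $\tau$ produces, canonically and functorially in $E$, a unique exact enhancement
\[
W_\ast E \colon \DM^\kgl_{S,\res} \longrightarrow \Fil_\up(\Cc)
\]
of the restriction of $E$ to resolvable motives, characterized on pure objects $M$ by $(W_\ast E)(M) \simeq \tau_{\geq -\ast}E(M)$. The uniqueness clause in the theorem follows tautologically because any two exact filtered enhancements that agree on the generating subcategory $\Pure^\kgl_S$ must agree on the closure under finite limits, colimits, and retracts, i.e. on all of $\DM^\kgl_{S,\res}$.

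The main obstacle I anticipate lies in the connectivity check of the first step. Over a field this is classical (it is essentially the Hom-vanishing for Chow motives, reflecting the connectivity of $K$-theory of smooth projective varieties), but over a general affine Dedekind base one must carefully combine Atiyah duality with the effective slice filtration on $\kgl$, tracking the Tate shift contributed by the relative tangent bundles of $X$ and $Y$ and confirming that the resulting bidegree lies in the range where connectivity of $\kgl$-cohomology of smooth schemes is known.
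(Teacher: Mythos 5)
Your proposal is correct and follows essentially the same route as the paper: connectivity of mapping spectra between perfect pure motives, a bounded Chow weight structure on $\DM^\kgl_{S,\res}$ with heart $\Pure^\kgl_S$, and then the universal property extending the additive functor $\tau_{\geq-\ast}\circ E$ from the heart to all resolvable motives (the paper phrases this via Elmanto--Sosnilo's identification $\DM^\kgl_{S,\res}\simeq\Pure_S^{\kgl,\fin}$ rather than citing the filtered enhancement of \cite{haine-pstragowski} directly, but the content is the same). The one step you flag but leave open is exactly where the paper does its real work: effectivity of $\kgl$ alone only gives $\kgl^{i,j}(X)=0$ for $i>j+\dim(X)$, and the connectivity needed after dualizing requires the stronger vanishing above the Chow line $i>2j$, which the paper obtains by combining very effectivity of $\kgl$ (Bachmann's framed suspension spectrum result), Bott periodicity via $\kgl/\beta\simeq\mathrm{H}\Zb$, and Spitzweck's vanishing of motivic cohomology above the Chow line over Dedekind domains.
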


We have introduced the minus sign above to be consistent with the convention in literature that weight filtrations are increasing. In the main body of the paper, we show a slightly stronger result, namely that any additive functor valued in a stable $\infty$-category defined on the subcategory of perfect pure motives extends uniquely to an exact functor on all resolvable motives, see \cref{prop:PureToRes}. 

In practice, \cref{thm:IndependenceOfWts} will be applied to functors of the form $E(-) \colon \DM^{\kgl}_{S,\res} \to \spectra$, or variants such as functors valued in filtered spectra,  represented by a (usually non-resolvable) $\kgl$-linear motivic spectrum $E$. The difference to prior art (see e.g. \cite{bondarko2010weight},  \cite{haine-pstragowski}) is that we allow $S$ to be a Dedekind scheme whose residual characteristic need not be invertible in the coefficients of $E$. Examples of cohomology theories that can be represented by a $\kgl$-linear motivic spectrum and that contain interesting at-the-characteristic information include $H_\mot, H_\dR, H_\crys, H_\prism, K, \THH$, and $\TC$, as well their filtered variants.\footnote{For the $p$-complete examples in this list, the $\kgl$-linear representability follows from \cite[Corollary~6.13]{AHI:atiyah}. The argument for the non-$p$-complete examples (e.g. de Rham cohomology, $\TC$) is similar, but uses the commutative algebra maps of filtered sheaves spectra constructed by Bouis \cite{bouis:2024}. We provide a detailed argument for the representability of de Rham cohomology in \S\ref{subsection:de_rham_and_hodge_cohomology_as_motivic_spectra}.} 

In order to understand the geometric significance of \cref{thm:IndependenceOfWts}, we recall that if $(X,D)$ is a projective strict normal crossing divisor pair (\textit{sncd pair}) of dimension $n$ over $S$,\footnote{Recall that $D = D_1+\cdots+D_r \subseteq X$ is a relative snc divisor if it is snc divisor and all of its components $D_i$ are smooth over $S$.} then its \emph{logarithmic motive}, defined as the total fibre
\begin{equation}
    \Mbf_{\kgl}(X,D) \colonequals \tfib_{I \subset [r]} (\Mbf_{\kgl}(D_I)(-n)^\vee) \in \DM^\kgl_{S,\res}
\end{equation}
of the hypercube given by the dual of the Tate-twisted canonical diagram of $D_I = \bigcap_{i \in I} D_i$, is resolvable.\footnote{The one-morphisms of this hypercube coincide with Tang's Gysin maps \cite{tang:gysin}. Thus, the logarithmic motive is assembled from ordinary motives using Gysin maps. This is similar to logarithmic homotopy theory where Gysin sequences can be used to alter the logarithmic structure of a motive \cite[Theorem~1.3.5]{binda:2023}.} As the terminology suggests, evaluating a motivic spectrum $E$ on a logarithmic motive corresponds to computing the logarithmic $E$-cohomology of the sncd pair $(X,D)$.\footnote{It was proven in \cite[Proposition~6.27]{AHI:atiyah} that in the cases where $E$ represents the crystalline and the de Rham cohomology, $E(\Mbf_{\kgl}(X,D))$ coincides with the logarithmic crystalline and logarithmic de Rham cohomology of the pair $(X,D)$ as they are traditionally defined. A similar comparison result holds in general (e.g. for $H_\prism, \THH$, and $\TC$ considered in \cite{rognes2009topological, koshikawa2020logarithmic, binda2023logarithmic}), and its proof is the subject to upcoming joint work of the first author with Marc Hoyois.} Thus, \cref{thm:IndependenceOfWts} may be regarded as a machine for producing weight filtrations for logarithmic cohomology theories. This is already quite interesting in the case of positive characteristic, where for example it equips logarithmic crystalline cohomology with a functorial weight filtration independently of the results of \cite{nakkajima2008weight}. For many other cohomology theories, such as logarithmic prismatic cohomology \cite{koshikawa2020logarithmic,binda2023logarithmic}, no weight filtration has been constructed previously.


The motivic point of view not only provides a convenient way to define weight filtrations on logarithmic cohomology, but importantly also shows that they are invariants of the \emph{open part} $U = X - D$. Indeed, essentially due to the Morel--Voevodsky purity theorem \cite{morel:1999}, the logarithmic motive admits a simple geometric description in the presence of $\Abf^1$-invariance, namely
\begin{equation}
    \Mbf_{\kgl}(X,D) \simeq \Mbf^{\Abf^1}_{\kgl}(U) \in \DM^\kgl_{S, \res}
\end{equation}
(see \cite[Lemma~6.24]{AHI:atiyah}), where $\Mbf^{\Abf^1}_{\kgl}(U)$ is the $\Abf^1$-invariant $\kgl$-motive of $U$. In particular,  $\Mbf_{\kgl}(X,D)$ is an invariant of $U$. As a consequence of this, we may strengthen \cref{thm:IndependenceOfWts} to the following result.


\begin{theorem}
Let $S$ be an affine Dedekind scheme, and let $(X,D)$ be a projective sncd pair over $S$. Then the filtered object
\begin{equation}
    (W_*E)(\Mbf_{\kgl}(X,D)) \in \Fil_\up(\Cc)
\end{equation}
defined in \cref{thm:IndependenceOfWts} is an invariant of $U = X-D$.
\end{theorem}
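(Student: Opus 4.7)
The plan is to combine \cref{thm:IndependenceOfWts} with the Morel--Voevodsky purity identification $\Mbf_{\kgl}(X,D) \simeq \Mbf^{\Abf^1}_{\kgl}(U)$ cited from \cite[Lemma~6.24]{AHI:atiyah}. Since $W_*E$ is exact and therefore preserves equivalences of resolvable motives, it will suffice to show that $\Mbf_{\kgl}(X,D)$, viewed as an object of $\DM^\kgl_{S,\res}$, depends functorially only on $U$.

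First I would verify that $\Mbf_{\kgl}(X,D)$ is resolvable, so that \cref{thm:IndependenceOfWts} applies. This is built into the definition: each vertex $\Mbf_{\kgl}(D_I)(-n)^\vee$ of the defining hypercube is perfect pure by Atiyah duality, since $D_I$ is smooth projective over $S$, and the total fibre is a finite limit, which is one of the operations generating $\DM^\kgl_{S,\res}$ from perfect pure motives.

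Next I would invoke the identification $\Mbf_{\kgl}(X,D) \simeq \Mbf^{\Abf^1}_{\kgl}(U)$ in $\DM^{\kgl,\Abf^1}_S$. A priori, $\Mbf^{\Abf^1}_{\kgl}(U)$ need not be resolvable, but the equivalence shows that it is in our situation, and by full faithfulness of the inclusion $\DM^\kgl_{S,\res} \subset \DM^{\kgl,\Abf^1}_S$ the equivalence already takes place in $\DM^\kgl_{S,\res}$. Similarly, a morphism of opens $U \to U'$ coming from a morphism of sncd pairs induces a map of $\Abf^1$-invariant motives, which restricts to a map $\Mbf_{\kgl}(X,D) \to \Mbf_{\kgl}(X',D')$ in $\DM^\kgl_{S,\res}$. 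Thus the assignment $(X,D) \mapsto \Mbf_{\kgl}(X,D) \in \DM^\kgl_{S,\res}$ factors through $U \mapsto \Mbf^{\Abf^1}_{\kgl}(U)$, and applying the exact functor $W_*E$ yields the claimed $U$-functoriality of $(W_*E)(\Mbf_{\kgl}(X,D))$.

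The main conceptual obstacle has already been overcome in the cited input, namely the upgrade of Morel--Voevodsky purity to an actual equivalence of motives rather than merely an isomorphism of cohomology groups. Once this is in place, our argument is purely formal bookkeeping with the fully faithful inclusion of resolvable into $\Abf^1$-invariant motives; the only nontrivial observation is that this full faithfulness promotes the $\Abf^1$-invariant equivalence to one internal to the resolvable subcategory, which is precisely what makes $W_*E$ applicable.
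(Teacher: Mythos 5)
Your proposal is correct and follows exactly the paper's own route: the result is deduced from the purity equivalence $\Mbf_{\kgl}(X,D) \simeq \Mbf^{\Abf^1}_{\kgl}(U)$ of \cite[Lemma~6.24]{AHI:atiyah}, which shows the logarithmic motive itself is an invariant of $U$, after which applying the exact functor $W_*E$ is immediate. The extra bookkeeping you supply (resolvability of the hypercube vertices via Atiyah duality, and full faithfulness of $\DM^\kgl_{S,\res} \subset \DM^{\kgl,\Abf^1}_S$ promoting the equivalence into the resolvable subcategory) is exactly what the paper leaves implicit.
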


Next, we compare our weight filtration with the filtration by pole orders on the logarithmic de Rham cohomology \cite{deligne:HodgeII}. Unfortunately, our comparison is proven to be functorial only in those maps of sncd pairs that do not change the ambient variety. This is because our proof strategy is ill-suited for establishing functoriality. We outline an alternative strategy that should give a more functorial comparison result in \cref{rem:OurFailures}. We denote by $\dR(S) \in \DM^\kgl_S$ the $\kgl$-motive that represents algebraic de Rham cohomology over $S$ (see \cref{definition:hodge_filtered_de_rham_cohomology_and_its_cousins}). 

\begin{theorem}[\cref{thm:DeligneCompare}]
Let $S$ be an affine Dedekind scheme, and let $(X,D)$ be a projective sncd pair over $S$. Then there exist an equivalence 
\begin{align}
D_* \R\Gamma(X; \Omega^\bullet_{X/S}(\log D)) &\simeq W_*\dR(S)(\Mbf_{\kgl}(X,D)) \in \Fil_\up(\D(S)), 
\end{align}
where the left side is the décalaged pole-order filtration on logarithmic de Rham cohomology. In particular, the left side is an invariant of the open part $U = X - D$.
\end{theorem}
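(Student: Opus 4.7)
The plan is to deduce the theorem from the uniqueness in \cref{thm:IndependenceOfWts} (sharpened by \cref{prop:PureToRes}): the weight filtration $W_{\ast}\dR(S)$ is the essentially unique exact extension to $\DM^\kgl_{S,\res}$ of the $t$-structure truncation $\tau_{\geq -\ast}\dR(S)$ defined on perfect pure motives. So it suffices to exhibit the décalaged pole-order filtration as such an exact extension, and match it with $\tau_{\geq -\ast}\dR(S)$ on perfect pure motives.

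First, I would verify the agreement on perfect pure motives. Taking a smooth projective $X/S$ (so $D = \emptyset$), the pole-order filtration on $\Omega^\bullet_{X/S}(\log D) = \Omega^\bullet_{X/S}$ is the stupid filtration concentrated in weight $0$. A direct calculation with the definition of Deligne's décalage gives
\[
\Dec(W)_{n}\,\Omega^\bullet_{X/S} \;\simeq\; \tau^{\leq n} \Omega^\bullet_{X/S},
\]
so that $D_{n}\R\Gamma(X;\Omega^\bullet_{X/S}) = \tau^{\leq n}\R\Gamma_{\dR}(X/S)$, which is the $t$-truncation $\tau_{\geq -n}\dR(S)(\Mbf_\kgl(X))$ after translating from cohomological to homological conventions. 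The behaviour under Tate twists is then forced by exactness, using that $\dR(S)(k)$ is a shift of $\dR(S)$.

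Second, I would promote the décalaged pole-order filtration to an exact functor on all resolvable motives. The functor
\[
(X,D) \;\longmapsto\; D_{\ast}\R\Gamma(X;\Omega^\bullet_{X/S}(\log D))
\]
is naturally defined on morphisms of projective sncd pairs over $S$. The essential input is Deligne's Poincaré residue isomorphism
\[
\gr^{W}_{m}\,\Omega^\bullet_{X/S}(\log D) \;\simeq\; a^{(m)}_{\ast}\,\Omega^\bullet_{D^{(m)}/S}[-m],
\]
where $D^{(m)}$ is the disjoint union of the $m$-fold intersections. Combined with the hypercube presentation $\Mbf_{\kgl}(X,D) \simeq \tfib_{I\subseteq[r]}\Mbf_{\kgl}(D_{I})(-n)^{\vee}$ and Atiyah duality for the smooth projective $D_{I}$, this identifies $\gr^{W}_{\ast}$ of the pole-filtered log de Rham complex with the de Rham evaluation of the corresponding hypercube of perfect pure motives. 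Exactness of $\Dec$ on filtered derived categories then transports this into the statement that our candidate functor is the (unique) exact extension from perfect pure motives furnished by \cref{prop:PureToRes}. Combined with Step 1, this yields the desired equivalence, and the invariance in $U$ follows immediately from the identification $\Mbf_\kgl(X,D)\simeq \Mbf^{\Abf^1}_\kgl(U)$ recalled above.

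The main obstacle is the compatibility verification in the second step: matching Deligne's residue isomorphism with the hypercube structure of $\Mbf_\kgl(X,D)$ in a way that respects the filtered structure on the nose, rather than merely up to filtered quasi-isomorphism. This amounts to checking that the Poincaré residue map commutes with the Gysin-type identifications entering Atiyah duality on the $D_I$, after applying $\Dec$. I would address this by working in the filtered derived $\infty$-category and using that $\Dec$ is a (symmetric monoidal) exact functor commuting with finite limits, so it suffices to verify the comparison at the level of associated gradeds, where both sides reduce to evaluating $\dR(S)$ on Tate twists of smooth projective summands.
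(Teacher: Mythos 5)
Your Step 1 is correct and matches the paper's \cref{lem:DecTurnsInsToWhitehead}, and the geometric ingredients you list in Step 2 (residue sequences, Poincar\'e duality, the hypercube of intersections) are exactly the ones the paper uses. However, your overall logical strategy has a genuine gap. You propose to invoke the \emph{uniqueness} clause of \cref{thm:IndependenceOfWts} by ``exhibiting the d\'ecalaged pole-order filtration as an exact extension'' to $\DM^\kgl_{S,\res}$. But $D_*\R\Gamma(X;\Omega^\bullet_{X/S}(\log D))$ is only defined on sncd pairs and their (few) morphisms; to apply the uniqueness you would need an exact functor on all of $\DM^\kgl_{S,\res}$ whose value on $\Mbf_\kgl(X,D)$ is the d\'ecalaged pole-order filtration, and you never construct one. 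The paper explicitly records (in the open-problems discussion of \cref{sect:misc}) that the natural attempt to build such a functor --- via logarithmic motives --- fails because the $D_*$-filtered de Rham cohomology does not satisfy Nisnevich or even Zariski descent. The paper's actual proof sidesteps this entirely: it never makes $D_*$ functorial in motives, but instead computes \emph{both} sides as the same explicit total cofibre --- the right side by exactness of $W_*$ applied to the hypercube defining $\Mbf_\kgl(X,D)$ (using monoidality of $\dR(S)(-)$ to handle duals and Tate twists), and the left side via \cref{proposition:formula_for_decalaged_pole_order_filtration}.

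There is a second, more technical error in your proposed fix for the ``main obstacle'': you assert that $\Dec$ is an exact functor commuting with finite limits, so that the comparison can be checked on associated gradeds. Neither claim holds in general. D\'ecalage is built from Beilinson truncations and does \emph{not} preserve levelwise cofibre sequences of filtered objects; this is precisely why the paper needs \cref{lem:filres_graded_split} (the residue sequence splits on graded pieces) as input to \cref{lem:decfilres}, which is the statement that d\'ecalage does carry the residue sequence to a cofibre sequence in this particular case. Likewise, agreement on associated gradeds does not determine a filtered object; the paper instead identifies the relevant hypercubes after forgetting the filtration and then invokes full faithfulness of the Whitehead tower functor (\cref{theorem:whitehead_miracle}) to upgrade this to a filtered identification, using that both sides are already known to carry degreewise Whitehead filtrations on their vertices. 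If you replace your appeal to uniqueness by the explicit total-cofibre computation, and supply the splitting lemma and the full-faithfulness argument, your outline becomes the paper's proof.
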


We also prove similar result for the Hodge filtered de Rham cohomology and for the Hodge cohomology. This result can be regarded as a generalization of Deligne's theorem from characteristic 0 that states that, for a projective sncd pair $(X,D)$, the spectral sequence induced by the pole-order filtration is an invariant of $U$ from the $E_2$-page onward. It suggests that ``mixed Hodge theory'' is an interesting invariant of non-projective schemes in positive and mixed characteristic, not only in characteristic 0.

We end this article in \cref{sect:misc}, where we compute explicit examples of weight graded pieces of cohomology theories, explain how to extend weight filtration to singular schemes, and conjecture an alternative construction of the category of resolvable motives that would be useful for defining functors out of it. One of the computations (\cref{ex:bdry}) reproves a slight weakening of a result of Thuillier stating that the singular cohomology of the dual complex associated to the boundary divisor of a good projective compactification does not depend on the chosen  compactification.

\begin{remark}
Unlike in the case of $\ell$-adic cohomology theories studied in \cite{haine-pstragowski}, where the weight filtrations are constructed for an appropriate class of cohomology theories representable in $\SH(k)$, beware that \cref{thm:IndependenceOfWts} only applies to $\kgl$-linear cohomology theories is and the resulting weight filtration is only natural in $\kgl$-linear maps between such. This in particular means that a priori only $\kgl$-linear cohomology operations can be lifted to the weight-filtered context. A non-trivial example of such a cohomology operation at the characteristic is the first motivic Milnor operator $Q_1$ on mod-$p$ motivic and syntomic cohomology of smooth $\Fb_p$-varieties \cite{annala-elmanto, carmeli-feng}.

The key difference in the $p$-adic case is that while the needed connectivity result, \cref{lem:kglChow}, holds for $\kgl$-motives, we do not know if this vanishing can be extended to the case of $\MGL$-motives, or further to the case of the motivic sphere 
(in the modified form of \cite[Lemma 3.3.4]{haine-pstragowski}). Away from the characteristic these depend on in an essential way on the Hopkins-Morel-Hoyois equivalence of \cite{hoyois:2013}, which is not currently known to hold at the characteristic. 
\end{remark}

\subsection{Notation and conventions} 

In this paper, we will have to make use of both increasing (such as the negatively indexed Whitehead filtration $\tau_{\geq - \ast} X$ of a spectrum) and decreasing (such as the Hodge filtration $\Omega^{\geq \ast}$ on the de Rham complex) filtrations. To keep these apart, if $\ccat$ is an $\infty$-category, we write $\Fil^{\down}(\ccat) \colonequals \Fun(\Zb^{\op}, \ccat)$
for the $\infty$-category of decreasingly filtered objects and $\Fil_{\up}(\ccat) \colonequals \Fun(\Zb, \ccat)$ for the $\infty$-category of increasingly filtered objects. 

If $S$ is a derived scheme, we write $\D(S) \simeq \QCoh(S)$ for its derived $\infty$-category, $\DF(S) \colonequals \Fil^{\down}(\D(S))$ for the  filtered derived $\infty$-category, and $\DGr(S) \colonequals \Fun(\mathbb{Z}, \D(S))$ for the graded derived $\infty$-category. Note that according to our convention, the filtered derived $\infty$-category is \emph{decreasingly} filtered. 

\subsection{Acknowledgments}
We would like to thank Brian Shin for pointing out a mistake in the original proof of vanishing of $\kgl$ above the Chow line, Tess Bouis for explaining facts about motivic filtrations, as well as Ryomei Iwasa and Yuchen Wu for useful discussions.

\section{Construction of the weight filtration}

\begin{lemma}[Effective $K$-theory vanishes above the Chow line]
\label{lem:kglChow}
If $X \in \Sm_S$, then $\kgl^{i,j}(X) \cong 0$ for all $i > 2j$.
\end{lemma}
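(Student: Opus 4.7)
The plan is to use the slice (equivalently, effective) filtration on $\kgl$ to reduce the statement to the well-known vanishing of motivic cohomology above the Chow line. The key input is the slice calculation: $\kgl$, being the effective cover of $\KGL$, has slices $s_n(\kgl) \simeq \Sigma^{2n, n} H\mathbf{Z}$ for $n \geq 0$, and $s_n(\kgl) \simeq 0$ for $n < 0$. This is the motivic analog of the classical Postnikov tower of connective topological $K$-theory. Over a field it is due to Voevodsky and Levine; over an affine Dedekind scheme the required slice calculation is available via Spitzweck's construction of motivic cohomology, and crucially does not rely on the Hopkins-Morel-Hoyois equivalence (consistent with the remark in the introduction that distinguishes the $\kgl$-case from the $\MGL$-case).

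Applying the slice tower to the $\kgl$-cohomology of a smooth $S$-scheme $X$ yields a descending filtration on $\kgl^{i,j}(X)$ whose associated graded is controlled by the motivic cohomology groups
\[
[X, \Sigma^{i,j} s_n(\kgl)] \;\simeq\; H^{i + 2n,\, j + n}(X; \mathbf{Z}), \qquad n \geq 0.
\]
Under the hypothesis $i > 2j$, one has the elementary identity $(i + 2n) - 2(j + n) = i - 2j > 0$ for every $n$, so every contributing bidegree lies strictly above the Chow line $a = 2b$. It therefore suffices to invoke the standard vanishing $H^{a,b}(X; \mathbf{Z}) = 0$ whenever $a > 2b$ and $X$ is smooth: for $b < 0$ this is automatic, while for $b \geq 0$ it follows from the Bloch-style cycle-complex description of motivic cohomology, whose weight-$b$ component is concentrated in simplicial degrees at most $2b$.

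The main subtlety to address is the convergence of the slice spectral sequence in this Dedekind-base setting (and over a possibly non-regular base, the identification of $\kgl$-slices has to be understood with some care). However, since all $E_1$-entries in the relevant range already vanish, conditional convergence is enough: every page of the spectral sequence is trivial in the bidegrees under consideration, and so the associated graded, and hence the abutment, vanishes. Assembling these ingredients yields the desired vanishing $\kgl^{i,j}(X) = 0$ for $i > 2j$. The hardest part of writing this up cleanly is isolating the exact form of slice-tower convergence used and pointing to a reference for the slice computation of $\kgl$ that is valid without inverting residual characteristics; the cohomological vanishing input itself is completely standard.
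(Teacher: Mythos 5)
Your slice-tower strategy is essentially the paper's argument in different clothing: since $f_n\kgl \simeq \Sigma^{2n,n}\kgl$ via the Bott element, the slice tower of $\kgl$ is the $\beta$-adic tower, the cofibre sequences $f_{n+1}\kgl \to f_n\kgl \to s_n\kgl$ are the Bott long exact sequences with $s_n\kgl \simeq \Sigma^{2n,n}\mathrm{H}\mathbf{Z}$ (equivalently $\kgl/\beta \simeq \mathrm{H}\mathbf{Z}$), and your observation that every $E_1$-entry contributing to a bidegree with $i > 2j$ lies strictly above the Chow line is exactly the paper's observation that multiplication by powers of $\beta$ induces surjections $\kgl^{2j'+r,j'}(X) \twoheadrightarrow \kgl^{2j+r,j}(X)$ for $r>0$.

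However, the way you dispose of convergence is a genuine gap. Vanishing of all $E_1$-entries only tells you that the slice filtration on $\kgl^{i,j}(X)$ is constant, i.e., that $\kgl^{i,j}(X) = \bigcap_n \mathrm{im}\bigl((f_n\kgl)^{i,j}(X) \to \kgl^{i,j}(X)\bigr)$; to conclude that this intersection is zero you need the filtration to be complete and Hausdorff, which is precisely the content of conditional convergence and is \emph{not} implied by the vanishing of the $E_1$-page --- it is the hypothesis under which that vanishing would suffice, and slice-completeness of $\KGL$-type spectra is a nontrivial matter in general. The paper closes exactly this hole with an input your proposal omits: by Bachmann, $\kgl = f_0\KGL$ is a framed suspension spectrum, hence very effective, hence connective in the homotopy $t$-structure, which gives the range-of-dimensions vanishing $\kgl^{a,b}(X) \cong 0$ for $a > b + \dim(X)$. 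Applied to $(f_n\kgl)^{i,j}(X) \cong \kgl^{i+2n,\,j+n}(X)$ this shows the filtration is eventually zero in each fixed bidegree, so the tower converges trivially in the range you need (no appeal to general slice convergence is required). If you add this connectivity bound, your argument closes up and becomes a correct, and in substance identical, proof.
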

\begin{proof}
Bachmann proves that $\kgl := f_0 \KGL$ is a framed suspension spectrum, hence very effective \cite{bachmann:2024}.\footnote{Any framed motivic space is colimit of framed motivic spaces of form $L_\mathrm{mot} \gamma(X) \in \mathrm{H}^\mathrm{fr}_S$, where $X \in \Sm_S$~\cite[Proposition~3.2.10]{EHKSY1}. Moreover, the framed suspension spectrum of $L_\mathrm{mot} \gamma(X) \in \mathrm{H}^\mathrm{fr}_S$ coincides with the usual suspension spectrum of $L_\mathrm{mot} X \in \mathrm{H}_S$ in $\SH^\mathrm{fr}_S = \SH_S$, where the last identification is the reconstruction theorem \cite[Theorem~18]{hoyois-loc}. Thus, the very effectivity of framed suspension spectra follows from the fact that very effective motivic spectra are closed under colimits.} Thus, $\kgl$ is connective in the homotopy t-structure, and consequently for all $X \in \Sm_S$
\[
\kgl^{i,j}(X) \cong 0
\]
if $i > j+\dim(X)$. Moreover,
\begin{equation}\label{eq:kgltoHZ}
\kgl/\beta \simeq H\mathbf{Z},   
\end{equation}
where $\beta \in \kgl^{-2,-1}(S)$ is the Bott element \cite{bachmann:2024}. As motivic cohomology vanishes above the Chow line \cite{spitzweck:2018}, the long exact sequence arising from (\ref{eq:kgltoHZ}) implies that if $r>0$ then multiplication by powers of $\beta$ produce surjections $\kgl^{2j' + r, j'}(X) \tto \kgl^{2j + r, j}(X)$ where $j' \geq j \geq 0$. As $\kgl^{2j + r, j}(X) \cong 0$ if $j\geq\dim(X)$, the desired vanishing follows.
\end{proof}

\begin{lemma}
\label{lemma:connectivity_of_mapping_spectra_between_kgl_motives}
Let $A, C \in \Pure^\kgl_S$. Then the mapping spectrum
\[
\Hom_{\DM^\kgl_S}(A, C)
\] 
is connective. Moreover, every cofibre sequence
\[
A \to B \to C
\]
in $\DM^\kgl_S$ splits. 
\end{lemma}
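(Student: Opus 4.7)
My plan is to establish (1) first, from which (2) follows almost immediately.

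For (1), I would start by noting that for any fixed $A \in \DM^\kgl_S$, the full subcategory $\{C : \Hom_{\DM^\kgl_S}(A,C) \text{ is connective}\}$ is closed under extensions and retracts, since $\Hom(A,-)$ is exact and connective spectra are stable under cofiber sequences and retracts in $\Sp$; symmetrically for fixed $C$. Since $\Pure^\kgl_S$ is, by definition, generated under extensions and retracts from $\{\Mbf_\kgl(X)(n) : X \text{ smooth projective}/S,\ n \in \Zb\}$, a standard two-step reduction lets me assume $A = \Mbf_\kgl(X)(n)$ and $C = \Mbf_\kgl(Y)(m)$ with $X, Y$ smooth projective over $S$ of relative dimensions $d_X, d_Y$.

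For the base case, I would invoke Atiyah duality \cite[Corollary 5.15]{AHI:atiyah}: $\Mbf_\kgl(X)$ is dualizable, and because $\kgl$ is oriented the Thom spectrum of the relative tangent bundle is a pure Tate twist, so $\Mbf_\kgl(X)^\vee \simeq \Sigma^{-2d_X, -d_X}\Mbf_\kgl(X)$. Combined with the K\"unneth identification $\Mbf_\kgl(X) \otimes_\kgl \Mbf_\kgl(Y) \simeq \Mbf_\kgl(X \times_S Y)$, the mapping spectrum in question is identified with a shifted $\kgl$-cohomology of $X \times_S Y$. A careful tracking of bigradings shows that $[A, C[k]] \cong \pi_{-k}\Hom_{\DM^\kgl_S}(A,C)$ is a group $\kgl^{i,j}(X \times_S Y)$ in a bidegree satisfying $i - 2j = k$. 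For $k > 0$ one has $i > 2j$, so \cref{lem:kglChow} yields the required vanishing, and $\Hom_{\DM^\kgl_S}(A,C)$ is connective.

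For (2), given the cofiber sequence, the boundary $\partial : C \to A[1]$ lives in $[C, A[1]] \cong \pi_{-1}\Hom_{\DM^\kgl_S}(C, A)$. By (1), applied to $(C, A) \in \Pure^\kgl_S \times \Pure^\kgl_S$, this group vanishes, so $\partial$ is nullhomotopic and $B \simeq A \oplus C$.

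I do not anticipate any conceptual obstacle: the main work is the bookkeeping of motivic bidegrees, which must be arranged so that \cref{lem:kglChow} applies exactly at the threshold $k > 0$. The orientation of $\kgl$ is essential; without it, the dual of $\Mbf_\kgl(X)$ would involve a genuine Thom spectrum rather than a Tate twist, and the simple connectivity statement (without a bidegree-dependent shift) would fail.
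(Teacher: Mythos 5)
Your argument is correct and is essentially the paper's own proof: the paper performs the same reduction to Tate twists of motives of smooth projective schemes and delegates the base case to \cite[Lemma~2.2.13]{elmanto-sosnilo}, which is precisely your Atiyah-duality/K\"unneth computation landing in $\kgl^{i,j}(X\times_S Y)$ with $i-2j=k$ so that \cref{lem:kglChow} applies, and the splitting statement is deduced identically from the vanishing of $[C,A[1]]$. (Your bookkeeping implicitly uses that $(n)$ denotes the $\Tbf$-twist $(n)[2n]$, which is indeed the paper's convention.)
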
 

\begin{proof}
If $A, C$ are Tate twists of motives of a smooth projective variety, this is essentially the same as \cite[Lemma 2.2.13]{elmanto-sosnilo} but using \cref{lem:kglChow} instead of the analogous vanishing result for motivic cohomology. The general case follows. The second claim follows from the first by observing that every such cofibre sequence is classified by a map $C \to A[1]$ in $\DM^\kgl_S$ which is then necessarily nullhomotopic. 
\end{proof}

Thus, $\Pure^\kgl_S$ may be defined in an alternative way as the smallest retract-closed additive subcategory generated by the Tate twists of $\kgl$-motives of smooth projective $S$-schemes. As $\DM^\kgl_S$ is idempotent complete, so is $\Pure^\kgl_S$. Thus, $\Pure^\kgl_S$ is the heart of a Bondarko's \textit{weight structure} (see e.g. \cite{bondarko2010weight} or \cite[Definition~2.2.1]{elmanto-sosnilo}) as we explain next.

\begin{definition}
\label{definition:chow_weight_structure_on_resolvable_motives} 
The \emph{Chow weight structure} on the stable $\infty$-category $\DM^\kgl_{S,\res}$ of resolvable motives is defined as
\[
(\DM^\kgl_{S,\res,\geq 0}, \DM^\kgl_{S,\res,\leq 0}),
\]
where $\DM^\kgl_{S,\res,\geq 0}$ (resp. $\DM^\kgl_{S,\res,\leq 0}$) is the subcategory containing retracts of finite colimits (resp. limits) of perfect pure motives. 
\end{definition}

\begin{lemma}
The Chow weight structure is bounded. Furthermore, the heart of the Chow weight structure is $\Pure^\kgl_S$:
\[
\Pure^\kgl_S \simeq \DM^\kgl_{S,\res,\geq 0} \cap \DM^\kgl_{S,\res,\leq 0}.
\] 
\end{lemma}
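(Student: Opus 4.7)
The plan is to recognize \cref{definition:chow_weight_structure_on_resolvable_motives} as an instance of Bondarko's generation theorem for weight structures (\cite{bondarko2010weight}; see also the $\infty$-categorical formulation in \cite[\S 2.2]{elmanto-sosnilo}). The key inputs are that $\Pure^\kgl_S$ is a retract-closed additive subcategory of $\DM^\kgl_S$ which is \emph{negative}, in the sense that $\pi_k \Hom_{\DM^\kgl_S}(A, C) \simeq 0$ for all $A, C \in \Pure^\kgl_S$ and all $k < 0$. Retract closure holds by construction, while negativity is exactly the connectivity of mapping spectra established in \cref{lemma:connectivity_of_mapping_spectra_between_kgl_motives}. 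Bondarko's theorem then produces a unique bounded weight structure on the thick subcategory of $\DM^\kgl_S$ generated by $\Pure^\kgl_S$, which is $\DM^\kgl_{S,\res}$ by construction, with heart equal to $\Pure^\kgl_S$.

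It remains to identify the positive and negative parts of this abstractly-produced weight structure with the subcategories $\DM^\kgl_{S,\res,\geq 0}$ and $\DM^\kgl_{S,\res,\leq 0}$ of \cref{definition:chow_weight_structure_on_resolvable_motives}. In Bondarko's formulation, the positive part $\mathcal{C}_{w \geq 0}$ is the smallest retract-closed subcategory of $\DM^\kgl_{S,\res}$ that contains $\Pure^\kgl_S$ and is closed under positive shifts and under cofibers. In a stable $\infty$-category, every finite colimit is built from iterated cofibers, coproducts, and the initial object: coproducts remain in the additive heart $\Pure^\kgl_S$, the zero object is itself a cofiber $0 \simeq \mathrm{cofib}(\mathrm{id}_A)$, and suspensions arise as cofibers $M[1] \simeq \mathrm{cofib}(M \to 0)$. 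Thus the paper's closure under ``finite colimits and retracts of perfect pure motives'' coincides with Bondarko's closure, and the dual argument handles $\mathcal{C}_{w \leq 0}$. With the parts matched, both boundedness and the heart identification $\Pure^\kgl_S = \mathcal{C}_{w \geq 0} \cap \mathcal{C}_{w \leq 0}$ follow formally from the weight-structure axioms (the nontrivial direction of the heart identification uses the standard weight decomposition together with the orthogonality $\Map(\mathcal{C}_{w \geq 1}, \mathcal{C}_{w \leq 0}) \simeq 0$).

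The entire argument is essentially formal given \cref{lemma:connectivity_of_mapping_spectra_between_kgl_motives}. The main obstacle, if any, is purely bookkeeping: verifying that the compact ``finite (co)limits of perfect pure motives'' description of the weight-structure parts agrees with the more traditional ``extensions and positive/negative shifts of the heart'' description used by Bondarko. This discrepancy is resolved by the standard observation that in the stable setting these two closure operations coincide.
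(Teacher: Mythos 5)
Your proposal is correct and follows essentially the same route as the paper: both reduce the statement to the Bondarko/Elmanto--Sosnilo generation theorem for weight structures generated by a negative (negatively self-orthogonal), retract-closed additive subcategory, with \cref{lemma:connectivity_of_mapping_spectra_between_kgl_motives} supplying the negativity, and both identify the heart as the retract-closed additive closure of $\Pure^\kgl_S$, which is $\Pure^\kgl_S$ itself. The paper simply cites \cite[Remark~2.2.6]{elmanto-sosnilo} for both claims, while you additionally spell out the (correct) bookkeeping matching the ``finite (co)limits'' description of the parts with Bondarko's closure operations.
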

\begin{proof}
The first claim is a general fact about weight structures generated by negatively self-orthogonal categories \cite[Remark~2.2.6]{elmanto-sosnilo}. The second claim follows from the fact that the heart of $\DM^\kgl_{S,\res}$ is the smallest retract-closed additive category containing $\Pure^\kgl_S$ (\cite[Remark~2.2.6]{elmanto-sosnilo}), so it is $\Pure^\kgl_S$ itself.
\end{proof}

Recall that for an additive $\infty$-category $\Ac$, Elmanto--Sosnilo denote by $\Ac^\fin$ the stable $\infty$-category generated by the essential image of the Yoneda-embedding in additive spectrum-valued presheaves $\Fun^\times(\Ac^\op, \Sp)$. This category is the free stable $\infty$-category generated by the additive category $\Ac$ (\cite[Remark~2.1.18]{elmanto-sosnilo}) in the sense that if $\Dc$ is a stable $\infty$-category, then precomposition along $\Ac \hook \Ac^\fin$ induces an equivalence
\begin{equation}\label{eq:UnivPropOf}
\Fun^\exa (\Ac^\fin, \Dc) \simeq \Fun^\times(\Ac,\Dc).
\end{equation}
Applying the recognition principle of Elmanto--Sosnilo \cite[Theorem~2.2.9]{elmanto-sosnilo} in the case $\Ac = \Pure_k$, we obtain the following result.

\begin{proposition}\label{prop:PureToRes}
The restricted Yoneda embedding $\DM^\kgl_{S,\res} \hook \Fun^\times(\Pure^\kgl_S, \Sp)$ induces an identification 
\[
\DM^\kgl_{S,\res} \simeq \Pure_S^{\kgl,\fin}.
\]
Thus, if $\Dc$ is a stable $\infty$-category, then there is a canonical identification
\[
\Fun^\exa (\DM^\kgl_{S,\res}, \Dc) \simeq \Fun^\times(\Pure^\kgl_S,\Dc).
\]
\end{proposition}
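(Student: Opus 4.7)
The plan is to obtain both identifications essentially as a direct application of the Elmanto--Sosnilo recognition principle \cite[Theorem~2.2.9]{elmanto-sosnilo}, whose hypotheses have been set up by the preceding lemmas. First I would recall what the recognition principle requires: given a stable $\infty$-category $\Cc$ equipped with a bounded weight structure whose heart $\Cc^\heartsuit$ is additive and consists of objects whose mapping spectra are connective, the restricted Yoneda embedding realizes $\Cc$ as $(\Cc^\heartsuit)^\fin$, the free stable hull of $\Cc^\heartsuit$. I would then just check these hypotheses one by one for $\Cc = \DM^\kgl_{S,\res}$ equipped with the Chow weight structure of \cref{definition:chow_weight_structure_on_resolvable_motives}.

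The boundedness of the weight structure and the identification of its heart with $\Pure^\kgl_S$ were already established in the lemma preceding the proposition. The connectivity of mapping spectra in the heart is exactly \cref{lemma:connectivity_of_mapping_spectra_between_kgl_motives}. With these two inputs, the recognition principle applies and yields the identification
\[
\DM^\kgl_{S,\res} \simeq \Pure^{\kgl,\fin}_S
\]
compatibly with the restricted Yoneda embedding into $\Fun^\times(\Pure^\kgl_S, \Sp)$, since both sides receive a canonical functor from $\Pure^\kgl_S$ and the recognition principle identifies them as the stable hull of this inclusion.

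The second identification is then purely formal. Once the first equivalence is in hand, the universal property of $\Ac^\fin$ recorded in (\ref{eq:UnivPropOf}) applied to $\Ac = \Pure^\kgl_S$ gives, for any stable $\infty$-category $\Dc$,
\[
\Fun^\exa(\DM^\kgl_{S,\res}, \Dc) \simeq \Fun^\exa(\Pure^{\kgl,\fin}_S, \Dc) \simeq \Fun^\times(\Pure^\kgl_S, \Dc),
\]
as desired.

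The only mildly non-trivial step is verifying the hypotheses of the recognition principle, and in particular the connectivity assumption; but this has been bootstrapped from \cref{lem:kglChow} via \cref{lemma:connectivity_of_mapping_spectra_between_kgl_motives}. So I do not expect any real obstacle here beyond correctly invoking the statement of \cite[Theorem~2.2.9]{elmanto-sosnilo}, and in particular confirming that it produces the restricted Yoneda functor (rather than just some abstract equivalence) as the comparison map.
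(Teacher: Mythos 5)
Your proposal is correct and follows essentially the same route as the paper: verify that the Chow weight structure on $\DM^\kgl_{S,\res}$ is bounded with heart $\Pure^\kgl_S$ (using the connectivity of mapping spectra between perfect pure motives), invoke the Elmanto--Sosnilo recognition principle \cite[Theorem~2.2.9]{elmanto-sosnilo} to identify $\DM^\kgl_{S,\res}$ with $\Pure^{\kgl,\fin}_S$, and deduce the functor-category statement from the universal property (\ref{eq:UnivPropOf}). The only point the paper makes explicit that you leave implicit is the idempotent completeness of $\DM^\kgl_{S,\res}$, which is also a hypothesis of the recognition principle but is immediate from its definition as a retract-closed subcategory.
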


\begin{proof}
Indeed, $\DM^\kgl_{S,\res}$ is idempotent complete and the Chow weight structure is bounded. Thus \cite[Theorem~2.2.9(2)]{elmanto-sosnilo} implies that $\DM^\kgl_{S,\res} = \Pure_S^{\kgl,\fin}$.
\end{proof}

\begin{proof}[{Proof of \cref{thm:IndependenceOfWts}:}]
Since the Whitehead filtration $\tau_{\geq - \ast} \colon \spectra \rightarrow \Fil_\up(\spectra)$ is additive, so is the functor \
\[
\tau_{\geq -\ast} \circ E \colon \DM_{S}^{\kgl} \rightarrow \Fil_\up(\spectra). 
\]
Thus, this composite extends uniquely to the needed exact functor defined on all resolvable motives as a consequence of \cref{prop:PureToRes}. 
\end{proof}

\section{Comparison with the pole-order filtration on de Rham cohomology} 

In this section,  we compare our weight filtration with the filtration induced by pole orders on sheaves on differential forms with logarithmic singularities, introduced originally by Deligne \cite{deligne:1970,deligne:HodgeII} and studied further by Mokrane in positive characteristic \cite[\S 1]{mokrane:1993}.

\subsection{Logarithmic differential forms}

In this section we collect the properties of logarithmic differentials forms we will need. 

\begin{recollection}[{Logarithmic differentials forms}]
\label{recollection:logarithmic_differentials_forms}
By an \emph{sncd pair} $(X,D)$ over a scheme $S$, we mean a smooth $S$-scheme $X$ and a relative snc divisor $D$ on $X$. We denote by $\Omega^a_{X/S}(\log D)$ the sheaf of $a$-forms of $X$ with at worst logarithmic singularities along $D$ \cite[\S 3.1]{deligne:HodgeII}. We will refer to this as the \emph{sheaf of logarithmic $a$-forms} on $(X,D)$. Furthermore, we denote by 
\begin{equation}
    \Omega^a_{X/S,c}(\log D) := \Omega^a_{X/S}(\log D) (-D)
\end{equation}
the sheaf of \emph{compactly supported logarithmic $a$-forms} on $(X,D)$. Concretely, $\Omega^a_{X/S,c}(\log D)$ is the submodule of $\Omega^a_{X/S}(\log D)$ obtained by multiplying it by the ideal of the snc divisor $D$. The external derivative on (meromorphic) differential forms acts on logarithmic differential forms and logarithmic differential forms with compact support. Moreover, the wedge product of differential forms induces a perfect pairing of quasi-coherent sheaves
\begin{equation}
    \land \colon \Omega^a_{X/S}(\log D) \times \Omega^{n-a}_{X/S,c}(\log D) \to \Omega^{n}_{X/S,c}(\log D) = \Omega^n_{X/S},
\end{equation}
where $n$ is the dimension of $X$ \cite[Lemma~9.19]{esnault-viehweg}.
\end{recollection}

\begin{recollection}[{Residue and localization sequences}]
\label{recollection:residue_and_localization_sequences}
The sheaves of ordinary and compactly-supported logarithmic differential forms have the opposite functoriality in the following sense: if $D'$ is a relative snc divisor on $X$ that is contained in $D$, and $j \colon (X,D) \to (X,D')$ is the induced map of sncd pairs\footnote{A morphism $f \colon (X,D) \to (Y,E)$ of sncd pairs is a map of schemes $f \colon X \to Y$ such that $f^{-1} E \subset D$.}, then we obtain natural maps
\begin{equation}\label{eq:jPullDef}
    j^* \colon \Omega^a_{X/S}(\log D') \to \Omega^a_{X/S}(\log D)
\end{equation}
and
\begin{equation}\label{eq:jPushDef}
    j_* \colon \Omega^a_{X/S,c}(\log D) \to \Omega^a_{X/S,c}(\log D')
\end{equation}
from the natural inclusions of sheaves. Referring to maps $j$ of the above form as \emph{divisor enlargements}, we see that $\Omega^a_{X/S}(\log D)$ is contravariant and $\Omega^a_{X/S,c}(\log D)$ is covariant with respect to divisor enlargements. Furthermore, if $D_1$ is a component of $D$, we obtain the \emph{residue sequence}
\begin{equation}
\label{eq:res}
    0 \to \Omega^a_{X/S}(\log (D - D_1)) \xto{j^*} \Omega^a_{X/S}( \log D) \xto{\res} i_*\Omega^{a-1}_{D_1/S}(\log(D-D_1)\vert_{D_1}) \to 0
\end{equation}
and the \emph{localization sequence}
\begin{equation}
\label{eq:loc}
    0 \to \Omega^a_{X/S,c}(\log D) \xto{j_*} \Omega^a_{X/S,c}(\log (D-D_1)) \xto{i^*} i_*\Omega^a_{D_1/S,c}(\log(D-D_1)\vert_{D_1}) \to 0,
\end{equation}
where $\res$ is the residue morphism, and $i^*$ is the restriction of differential forms along $i \colon D_1 \hook X$. Both of these sequences are exact by \cite[\S 2]{esnault-viehweg}. 
\end{recollection}

We next record the naturality properties of the residue and localization sequences as lattice diagrams, a construction that, though perhaps unmotivated at this point, will be essential later for analyzing certain iterated fibers and cofibres of logarithmic cohomology complexes.

\begin{notation}[Labeling of the square]
\label{notation:labeling_of_the_square}
To keep track of certain exact sequences, it will be convenient to denote the square 1-category as $\mathrm{sq}^{1} \colonequals \Delta^{1} \times \Delta^{1}$ and to label its vertices using $-1, 0, \ast, 1$ as in the left hand side of
\[\begin{tikzcd}
	{-1} & 0 &\phantom{o}&\phantom{o}& a & b \\
	\ast & 1 &\phantom{o}&\phantom{o}& {0_{\acat}} & c
	\arrow[from=1-1, to=1-2]
	\arrow[from=1-1, to=2-1]
	\arrow[from=1-2, to=2-2]
	\arrow[from=1-5, to=1-6]
	\arrow[from=1-5, to=2-5]
	\arrow[from=1-6, to=2-6]
	\arrow[from=2-1, to=2-2]
	\arrow[from=2-5, to=2-6]
    \arrow[phantom, from=1-3, to=2-3, ""{name=LM, pos=.5, inner sep=0}]
	\arrow[phantom, from=1-4, to=2-4, ""{name=RM, pos=.5, inner sep=0}]
	\arrow[decorate,
	       decoration={snake, amplitude=0.2mm, segment length=2mm},
	       ->,
	       from={LM}, to=RM]
\end{tikzcd}.
\]
If $a \rightarrow b \rightarrow c$ is a null-sequence in an additive $\infty$-category $\acat$, then the \emph{associated square} is a functor $p \colon \mathrm{sq}^{1} \rightarrow \acat$ given by $p(-1) = a$, $p(0) = b$, $p(1) = c$ and $p(\ast) = 0_{\acat}$, as on the right hand side above. 
\end{notation}

\begin{construction}[Residue and localization sequences diagrams]
\label{construction:residue_and_localization_sequence_diagrams}
Let $D = D_1 + \cdots + D_r$ be a decomposition of $D$ into its components. We describe diagrams 
\begin{equation}
\mathfrak{res}, \mathfrak{loc} \colon \mathrm{sq}^{\times r} \to \QCoh^\heart(X)   
\end{equation}
valued in the abelian category of quasi-coherent sheaves on $X$, where $\mathrm{sq}$ is the square category with vertices labeled as in \cref{notation:labeling_of_the_square}. Their values at $\underline \alpha = (\alpha_1, \dots, \alpha_r)$ are given by, respectively, 
\[
   \mathfrak{res}(\underline \alpha) \colonequals \begin{cases}
    \Omega^{a - n^+(\underline \alpha)}_{D^+_{\underline \alpha} / S}(\log (D- D^{\pm}(\underline \alpha))) & \text{if } \alpha_{i} \neq \ast \text{ for all } i \\
    0 & \text{otherwise} 
\end{cases} 
\]
and
\[
   \mathfrak{loc}(\underline \alpha) \colonequals \begin{cases}
    \Omega^a_{D^+_{\underline \alpha} / S, c}(\log (D- D^{\geq}(\underline \alpha))) & \text{if } \alpha_{i} \neq \ast \text{ for all } i \\
    0 & \text{otherwise}
\end{cases},
\]
where
\begin{enumerate}
\item $n^+(\underline \alpha)$ is the number of indices $i$ such that $\alpha_i = 1$,
\item $D^+_{\underline \alpha} = \bigcap_{\alpha_i = 1} D_i$, 
\item $D^{\pm}(\underline \alpha) = \sum_{\alpha_i \not = 0} D_i$ and 
\item $D^{\geq}(\underline \alpha) = \sum_{\alpha_i \geq 0} D_i$.
\end{enumerate}
The morphisms are such that for each $1 \leq k \leq r$, the hypercolumns where we fix all coordinates $\alpha_{j}$ with $j \neq k$ correspond, respectively, to the residue exact sequence of (\ref{eq:res}) or the localization exact sequence of (\ref{eq:loc}) associated to $D_{i}$.  
\end{construction}

\begin{remark}
In the context of \cref{construction:residue_and_localization_sequence_diagrams}, since the diagram takes values in an abelian category, whether the composite of two morphisms is zero or not is a property rather than additional data. Thus, if we were only interested in diagrams valued in additive $1$-categories, instead of the square category $\mathrm{sq}$ we could have alternatively used its full subcategory spanned by $\{ -1, 0, 1 \}$. 

However, we will later want to consider diagrams valued in a stable $\infty$-category, in which case the nullhomotopy of a composite is additional data. The somewhat elaborate setup of \cref{notation:labeling_of_the_square} is designed to keep track of these null-homotopies as part of a functor out of a square category. 
\end{remark}

Let $(X,D)$ be an sncd pair over $S$. We denote by $P_* \Omega^a_{X/S}(\log D)$ the filtered sheaf obtained by equipping the sheaf of logarithmic $a$-forms with the \textit{filtration by pole order} \cite[\S 1]{mokrane:1993}. For an integer $j \geq 0$, the subsheaf
\[
P_j \Omega^a_{X/S}(\log D) \subset \Omega^a_{X/S}(\log D)
\]
consists of those logarithmic differential forms that, locally on $X$, have poles along at most 
$j$ irreducible components of $D$. Concretely, if \'etale--locally we choose parameters 
$t_1,\dots,t_r$ defining the components of $D$, then 
$P_j \Omega^a_{X/S}(\log D)$ is generated by forms of the form
\(
\tfrac{dt_{i_1}}{t_{i_1}} \wedge \cdots \wedge \tfrac{dt_{i_k}}{t_{i_k}} \wedge \omega,
\)
where $k \le j$ and $\omega$ is a regular $(a-k)$-form. This filtration is multiplicative and compatible with pullbacks.

By inspection, we observe that the residue sequence of (\ref{eq:res}) is compatible with this filtration in the following sense.

\begin{lemma}
\label{lem:filres}
The residue sequence enhances into an exact sequence
\begin{equation}
0 \to P_*\Omega^a_{X/S}(\log (D - D_1)) \xto{j^*} P_* \Omega^a_{X/S}( \log D) \xto{\res} i_*P_{*-1}\Omega^{a-1}_{D_1/S}(\log(D-D_1)\vert_{D_1})\to 0
\end{equation}
of filtered quasi-coherent sheaves. \qed
\end{lemma}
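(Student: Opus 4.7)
The plan is to verify the claim locally on $X$, reducing it to a concrete computation with differential forms with logarithmic poles. Let $f_1$ be a local equation for the component $D_1$, so that locally $d\log f_1 = df_1/f_1$ makes sense. Recall that $P_k \Omega^a_{X/S}(\log D)$ is the subsheaf locally generated by products $\alpha \wedge d\log f_{i_1} \wedge \cdots \wedge d\log f_{i_\ell}$ with $\ell \leq k$, where $\alpha$ is a regular $(a-\ell)$-form and $f_{i_j}$ are local equations of components of $D$. In particular, every local section $\omega$ of $P_k \Omega^a_{X/S}(\log D)$ decomposes uniquely as $\omega = \alpha \wedge d\log f_1 + \beta$ where $\alpha \in P_{k-1}\Omega^{a-1}_{X/S}(\log(D-D_1))$ and $\beta \in P_k \Omega^a_{X/S}(\log(D-D_1))$, and the residue is given by $\res(\omega) = \alpha|_{D_1}$.

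First I would check that $j^*$ and $\res$ are filtered maps. The map $j^*$ is an inclusion of subsheaves, and by the local description above it sends $P_k \Omega^a_{X/S}(\log(D-D_1))$ into $P_k \Omega^a_{X/S}(\log D)$. For $\res$, the local decomposition $\omega = \alpha \wedge d\log f_1 + \beta$ above shows that if $\omega \in P_k$ then $\alpha \in P_{k-1}\Omega^{a-1}_{X/S}(\log(D-D_1))$, and restriction along $D_1 \hookrightarrow X$ preserves pole order, so $\res(\omega) \in P_{k-1}\Omega^{a-1}_{D_1/S}(\log (D-D_1)|_{D_1})$ as required.

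Second, I would verify exactness at each level $k$. Injectivity of $j^*$ at level $k$ is inherited from the (unfiltered) residue sequence \eqref{eq:res}. Surjectivity of $\res$ at level $k$ is witnessed by a local lift: given $\gamma \in P_{k-1}\Omega^{a-1}_{D_1/S}(\log(D-D_1)|_{D_1})$, choose any extension $\tilde\gamma \in P_{k-1}\Omega^{a-1}_{X/S}(\log(D-D_1))$ (which exists since the unfiltered restriction is surjective and pole order is preserved), and observe that $\tilde\gamma \wedge d\log f_1 \in P_k\Omega^a_{X/S}(\log D)$ has residue $\gamma$. For exactness in the middle at level $k$, suppose $\omega \in P_k \Omega^a_{X/S}(\log D)$ with $\res(\omega) = 0$. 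By exactness of \eqref{eq:res}, $\omega$ comes from a unique local section $\omega' \in \Omega^a_{X/S}(\log(D-D_1))$. Using the local decomposition, $\res(\omega) = 0$ forces $\alpha|_{D_1} = 0$, hence $\alpha$ lies in $P_{k-1}\Omega^{a-1}_{X/S}(\log(D-D_1))(-D_1)$; combined with $\beta \in P_k \Omega^a_{X/S}(\log(D-D_1))$ one concludes that $\omega' \in P_k \Omega^a_{X/S}(\log (D-D_1))$.

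The proof amounts to unwinding the definition of the pole order filtration and tracking one index of the filtration through the local decomposition $\omega = \alpha \wedge d\log f_1 + \beta$. There is no significant obstacle; the only mild subtlety is the final step, where one must be careful that the decomposition interacts cleanly with the filtration on $\Omega^\bullet_{X/S}(\log(D-D_1))$, which can be verified by a direct local computation using a system of local equations for the components of $D$ passing through a given point.
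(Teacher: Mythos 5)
Your proposal is correct and matches the paper, which offers no proof at all beyond the phrase ``by inspection'' preceding the statement: the standard local verification you write out (the decomposition $\omega = \alpha \wedge d\log f_1 + \beta$ in a monomial basis adapted to local equations of the components, tracking one filtration index through $j^*$, $\res$, and the three exactness checks) is exactly the inspection being invoked. The one point you flag as a mild subtlety, middle exactness at level $k$, does go through by the computation you sketch, since the kernel of restriction to $D_1$ is generated by $f_1$ and $df_1$, and wedging either with $d\log f_1$ lands in $P_k\Omega^a_{X/S}(\log(D-D_1))$.
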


Moreover, the graded pieces of this filtration admit an explicit formula. 

\begin{lemma}
The residue morphisms induce a canonical isomorphism
\begin{equation}
    \Gr_j^P \Omega^a_{X/S}( \log D) = \tau_{j*} \Omega^{a-j}_{D^{(j)}/S},
\end{equation}
where $\tau_j \colon D^{(j)} \to X$ is the natural map from the disjoint union $D^{(j)}$ of $j$-fold intersections of components of $D$. 
\end{lemma}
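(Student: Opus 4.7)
The plan is to proceed by induction on the number $r$ of components of $D$, leveraging the filtered residue sequence from \cref{lem:filres}. The base case $r = 0$ is trivial since the pole filtration is concentrated in degree $0$ and $D^{(0)} = X$, so both sides equal $\Omega^a_{X/S}$. For the inductive step, write $D = D_1 + D'$ with $D' \colonequals D - D_1$, and apply \cref{lem:filres} to obtain the short exact sequence of filtered sheaves
\[
0 \to P_\ast \Omega^a_{X/S}(\log D') \xto{j^\ast} P_\ast \Omega^a_{X/S}(\log D) \xto{\res} i_{1\ast} P_{\ast-1} \Omega^{a-1}_{D_1/S}(\log D'|_{D_1}) \to 0.
\]
Passing to the $j$-th associated graded yields an exact sequence
\[
0 \to \gr_j^P \Omega^a_{X/S}(\log D') \to \gr_j^P \Omega^a_{X/S}(\log D) \to i_{1\ast} \gr_{j-1}^P \Omega^{a-1}_{D_1/S}(\log D'|_{D_1}) \to 0.
\]

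Next, I would apply the inductive hypothesis to the sncd pairs $(X, D')$ and $(D_1, D'|_{D_1})$, each having strictly fewer than $r$ boundary components. This identifies the outer terms with the pushforwards of $\Omega^{a-j}$ from $(D')^{(j)}$ and from $(D'|_{D_1})^{(j-1)}$, respectively. The key combinatorial observation is that $D^{(j)}$ decomposes as a disjoint union
\[
D^{(j)} \;=\; (D')^{(j)} \;\sqcup\; (D'|_{D_1})^{(j-1)},
\]
according to whether the corresponding subset $I \subset \{1,\dots,r\}$ of size $j$ contains the index $1$ or not. Moreover, on the second piece, the grading drop $a \mapsto a-1$ combined with $j \mapsto j-1$ still produces $\Omega^{a-j}$, which is exactly the restriction of $\Omega^{a-j}_{D_1/S}$ to the relevant intersection.

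Finally, since the two loci $(D')^{(j)}$ and $(D'|_{D_1})^{(j-1)}$ have disjoint set-theoretic support inside $X$, the pushforward sheaves land in orthogonal summands, which forces the short exact sequence to split canonically and gives the desired identification $\gr_j^P \Omega^a_{X/S}(\log D) \cong \tau_{j\ast} \Omega^{a-j}_{D^{(j)}/S}$. The main technical point worth checking is that the splitting is compatible with the residue morphism and realizes the displayed isomorphism through iterated residues; this is routine but worth verifying by tracing the composition on an étale local chart where $D = \{x_1 \cdots x_r = 0\}$ and $\Omega^a_{X/S}(\log D)$ admits its standard basis involving $dx_{i_1}/x_{i_1} \wedge \cdots \wedge dx_{i_k}/x_{i_k} \wedge \omega$. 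No genuine obstacle is expected: the induction handles the global structure, and the local model verifies that the residue indeed realizes the claimed isomorphism on graded pieces.
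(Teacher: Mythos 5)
The paper's own proof is a one-line citation to Mokrane's Equation (1.1.2), i.e.\ the classical local-coordinates computation of the graded pieces going back to Deligne and Esnault--Viehweg, so your inductive route through \cref{lem:filres} is genuinely different --- but as written it has a hole at the decisive step. The claim that $(D')^{(j)}$ and $(D'|_{D_1})^{(j-1)}$ ``have disjoint set-theoretic support inside $X$'' is false: already for $D = D_1 + D_2$ and $j=1$ the two pieces are $D_2$ and $D_1$, which meet along $D_1 \cap D_2$. The direct-sum decomposition of $\tau_{j*}\Omega^{a-j}_{D^{(j)}/S}$ comes from $D^{(j)}$ being an \emph{abstract} disjoint union of schemes, not from disjointness of their images in $X$, so there are no ``orthogonal summands'' forcing a splitting. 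More fundamentally, knowing the outer terms of a short exact sequence does not determine the middle term; to conclude you must exhibit the splitting, and the only natural way to do that is to construct the iterated-residue map $\Gr_j^P\Omega^a_{X/S}(\log D) \to \tau_{j*}\Omega^{a-j}_{D^{(j)}/S}$ directly --- which is precisely the content of the lemma. (Note also that \cref{lem:filres_graded_split} in the paper \emph{deduces} the splitting of the graded sequence from this lemma, so invoking any form of it here would be circular.)

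There is a second, smaller gap: passing to associated graded pieces of the filtered exact sequence of \cref{lem:filres} yields an exact sequence only if $j^*$ and $\res$ are \emph{strictly} compatible with the pole-order filtrations (e.g.\ $\res(P_k) = P_{k-1}$ on the nose), which again requires the local computation. The honest repair is to promote your closing remark to the actual proof: in an \'etale chart where $D = \{x_1\cdots x_r = 0\}$, the forms $dx_{i_1}/x_{i_1}\wedge\cdots\wedge dx_{i_j}/x_{i_j}\wedge\omega$ with $\omega$ pole-free give a local basis of $P_j/P_{j-1}$, and the $j$-fold iterated residues identify it with $\tau_{j*}\Omega^{a-j}_{D^{(j)}/S}$ (one checks the sign/ordering conventions make this canonical). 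Once that is done the induction is superfluous, and you have reproduced the computation the paper cites from Mokrane.
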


\begin{proof}
This is the level-wise version of \cite[Equation~(1.1.2)]{mokrane:1993}.
\end{proof}
Using this, we make the following observation that will be useful shortly.

\begin{lemma}
\label{lem:filres_graded_split}
The filtered exact sequence of \cref{lem:filres} induces split exact sequences on graded pieces.
\end{lemma}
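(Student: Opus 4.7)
The plan is to apply the graded-piece formula from the preceding lemma termwise to the filtered exact sequence of \cref{lem:filres}. Writing $D' := D - D_1$ and $E := D'|_{D_1}$, the $j$-th graded piece is a short sequence whose three terms are, after suppressing the canonical pushforwards to $X$,
\begin{equation*}
\Omega^{a-j}_{(D')^{(j)}/S}, \quad \Omega^{a-j}_{D^{(j)}/S}, \quad \Omega^{a-j}_{E^{(j-1)}/S}.
\end{equation*}

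The key combinatorial observation is that the $j$-fold intersections of components of $D = D_1 + \cdots + D_r$ partition canonically according to whether the index set $I \subseteq [r]$ contains the distinguished element $1$ or not. This yields a canonical scheme-theoretic disjoint union decomposition
\begin{equation*}
D^{(j)} \simeq (D')^{(j)} \sqcup E^{(j-1)},
\end{equation*}
where the first summand collects the $j$-fold intersections of components of $D'$ and the second, pushed forward through $i \colon D_1 \hookrightarrow X$, collects the $(j-1)$-fold intersections of components of $E$ inside $D_1$. Consequently, the middle term of the graded sequence is identified with the direct sum of the two outer terms.

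It remains to identify the maps. The inclusion $j^*$ is evidently the inclusion of the summand indexed by sets not containing $1$, since it sends a logarithmic form on $(X,D')$ to itself viewed as a form on $(X,D)$. The residue morphism, by definition, extracts the coefficient of $d\log(t_1)$ where $t_1$ is a local defining equation for $D_1$, and hence corresponds to the projection onto the summand indexed by sets containing $1$. I do not anticipate any significant obstacle here, as the entire argument reduces to well-known local formulas for the Poincar\'e residue together with the combinatorial splitting of $D^{(j)}$.
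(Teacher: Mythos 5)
Your proof is correct and follows essentially the same route as the paper's: both reduce to the graded-piece formula $\Gr_j^P \Omega^a_{X/S}(\log D) \cong \tau_{j*}\Omega^{a-j}_{D^{(j)}/S}$, the combinatorial decomposition $D^{(j)} = (D')^{(j)} \sqcup (D_1 \cap (D')^{(j-1)})$ according to whether the index set contains the distinguished component, and the identification of $j^*$ and $\res$ with the corresponding summand inclusion and projection. Your extra care in checking the degree shift on the third term (coming from $P_{*-1}$) and the local $d\log t_1$ description of the residue is consistent with, and slightly more explicit than, the paper's argument.
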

\begin{proof}
Denote $D' = D-D_1$. Then the $j$th graded piece of the sequence associated to the exact sequence in \cref{lem:filres} is
\begin{equation}
0 \to \tau_{j*} \Omega^{a-j}_{D'^{(j)}/S} \to \tau_{j*} \Omega^{a-j}_{D^{(j)}/S} \to \Omega^{a-j}_{D_1 \cap D'^{(j-1)}/S} \to 0.
\end{equation}
The claim follows from the fact that $D^{(j)} = D'^{(j)} \coprod (D_1 \cap D'^{(j-1)})$ and that the maps induced by inclusions of components.
\end{proof}

\subsection{de Rham and Hodge cohomology as motivic spectra} 
\label{subsection:de_rham_and_hodge_cohomology_as_motivic_spectra}

In this subsection, we construct motivic spectra representing motivic filtered algebraic $K$-theory, motivic cohomology and variants of de Rham cohomology. We work over an arbitrary derived scheme $S$, and the motivic filtration on algebraic $K$-theory we employ is the one of \cite{elmanto-morrow,bouis:2024}. 

Our construction is a slight modification of \cite[Construction~6.5]{AHI:atiyah}, and we follow the notation used therein. For more details, see \cite[Appendix C]{annala-shin}. 

\begin{cons}[Motivic spectra from modules over oriented cohomology theories]\label{cons:MSFromModules}
Let $S$ be a derived scheme, $\Pc = \Pc_{\Nis}(\Sm_S; \spectra)$ be the category of Nisnevich sheaves of spectra on $\Sm_S$, and let $E \in \CAlg(\Gr_\Zb(\Pc))$ be an oriented commutative algebra in the sense of \cite[Appendix~C]{annala-shin}. We construct a lax symmetric monoidal functor 
\begin{equation}
\forgetful \colon \Mod_E^\mathrm{pbf}(\Gr_\Zb(\Pc)) \to \MS_S. 
\end{equation}
We first observe that we have a symmetric monoidal left adjoint
\begin{equation}\label{eq:SomeShit}
    \Pc \to \Mod_E^\mathrm{pbf}(\Gr_\Zb(\Pc))
\end{equation}
obtained by left Kan extending the map that sends $X \in \Sm_S$ to the free $E$-module generated by $\underline X$ placed in graded degree zero and enforcing graded projective bundle formula. As a consequence of projective bundle formula, $\Mod_E^\mathrm{pbf}(\Gr_\Zb(\Pc))$ satisfies smooth blowup excision \cite[Lemma~3.3.4]{annala-iwasa:MotSp}. Moreover, the endomorphism $\Tbf \otimes -$ is equivalent to degree shift by one, and is therefore invertible. Thus, by the universal property of $\otimes$-inversion, the functor of (\ref{eq:SomeShit}) factors uniquely through a symmetric monoidal left adjoint
\[
- \otimes E \colon \MS_S \to \Mod_E^\mathrm{pbf}(\Gr_\Zb(\Pc)).
\]
Its right adjoint $\forgetful \colon \Mod_E^\mathrm{pbf}(\Gr_\Zb(\Pc)) \to \MS_S$ is the desired lax symmetric monoidal functor. 
\end{cons}

\begin{remark}
\label{remark:formula_for_sheaf_defined_by_our_motivic_spectra}
The construction of $\forgetful$ as a right adjoint implies that if $M$ is an $E$-module, then $\forgetful(M)$ represents $M$ in the sense that 
\begin{equation}
    \map_{\MS_S}(\Sigma_\Tbf^{\infty-a}X_+, \forgetful(M)) \simeq  M_a(X)
\end{equation}
for any $X \in \Sm_{S}$. 
\end{remark}

\begin{remark}
Note that since $\forgetful$ is lax symmetric monoidal, it has a canonical lift 
\[
\forgetful \colon \Mod_E^\mathrm{pbf}(\Gr_\Zb(\Pc)) \to \Mod_{\forgetful(E)}\MS_S. 
\]
to modules over the image of the monoidal unit. 
\end{remark}

We first use \cref{cons:MSFromModules} to construct $\kgl$ itself. As above, $S$ is a derived scheme and $\Pc = \Pc_{\Nis}(\Sm_S; \spectra)$ is the category of Nisnevich sheaves of spectra on $\Sm_S$. In a recent article constructing a motivic filtration on the algebraic $K$-theory of arbitrary schemes \cite{bouis:2024}, Bouis refines algebraic $K$-theory to a filtered sheaf 
\[
F^{*}_{\mot} K \in \CAlg(\Fil^{\down}(\Pc)).
\]

\begin{definition}
\label{definition:kgl_and_hmot_as_motivic_spectra}
We write $\kgl$ for the image of  $\Fil^*_\mathrm{mot} K$ under the composite 
\[
\begin{tikzcd}
	{\CAlg(\Fil^{\down}(\Pc))} & {\CAlg(\Gr_{\mathbb{Z}}(\Pc))} & {\CAlg(\MS_S)}
	\arrow["{\mathrm{U}}", from=1-1, to=1-2]
	\arrow["\forgetful", from=1-2, to=1-3]
\end{tikzcd}
\]
where the first arrow is the forgetful functor from filtered to graded objects and the second one is given by \cref{cons:MSFromModules}. We write $\mathrm{H} \Zb$ for the image of $\Fil^*_\mathrm{mot} K$ under the composite 
\[
\begin{tikzcd}
	{\CAlg(\Fil^{\down}(\Pc))} & {\CAlg(\Gr_{\mathbb{Z}}(\Pc))} & {\CAlg(\MS_S)}
	\arrow["{\mathrm{gr}}", from=1-1, to=1-2]
	\arrow["\forgetful", from=1-2, to=1-3]
\end{tikzcd}, 
\]
where $\mathrm{gr}$ is the associated graded object functor. We refer to these as, respectively, the \emph{effective algebraic $K$-theory} and \emph{motivic cohomology} motivic spectra. 
\end{definition}

\begin{remark}
The canonical natural transformation $U \rightarrow \mathrm{gr}$ given by projecting from a filtered object onto its associated graded provides a morphism $\kgl \rightarrow \mathrm{H} \Zb$ of commutative algebras in motivic spectra. 
\end{remark}

\begin{remark}
Note that in the case that is most relevant to this article, namely when $S$ is an affine Dedekind scheme, $F^*_\mathrm{mot}K$ coincides with the slice filtration of Voevodsky, see \cite[Remark~6.9]{bouis:2024} and \cite[Theorem~A]{bouis-kundu}. In particular, for smooth schemes over affine Dedekind schemes, the motivic spectra of \cref{definition:kgl_and_hmot_as_motivic_spectra} coincide with the classical ones.
\end{remark}

We move on to  motivic spectra coming from Hodge theory. There is a morphism 
\begin{equation}
    F^*_\mot K \to \Fil_\HKR^* \mathrm{HC}^-(-) \to F^*_\mathrm{HKR} \mathrm{HC}^-(-/S) \in \CAlg(\Fil^{\down}(\Pc)) 
\end{equation}
where the first map is constructed by Bouis, see \cite[\S 2.1--\S 2.3,\S 3.3]{bouis:2024}, and the second map is induced by the collapse formula \cite[Proposition~2.7]{antieau:gaussmanin} by taking the fixed points of the filtered circle action \cite[Theorem~8.17]{antieau:crystallization}. By passing to the associated graded objects and shearing \cite[Proposition~3.3.4]{raksit:2020}, we then obtain a morphism
\begin{equation}
    \Zb(*)^\mot \to U(F^*_\Hod\Omega^\bullet_{-/S}) \in \CAlg(\Gr(\Pc)), 
\end{equation}
from the graded algebra of motivic sheaves to the underlying graded algebra of the Hodge filtered de Rham cohomology over $S$.

\begin{definition}
\label{definition:hodge_filtered_de_rham_cohomology_and_its_cousins}
The \emph{Hodge-filtered de Rham cohomology} filtered motivic spectrum 
\[
\mathrm{H}^{\ast}\dR(S) \in \CAlg(\Fil^\down(\Mod_{\mathrm{H}\Zb}(\MS_S)))
\]
is the filtered algebra obtained from 
\[
\left[ \cdots \to  U(F^{*-1}_\Hod\Omega^\bullet_{-/S}) \to U(F^{*}_\Hod\Omega^\bullet_{-/S}) \to \cdots \right] \in \CAlg(\Fil^\down(\Mod_{\Zb(*)^\mot}(\Gr(\Pc)))), 
\]
where the structure morphisms are given by the structure maps of the Hodge filtration, by applying the functor $F$ from \cref{cons:MSFromModules} level-wise.
\end{definition}

Having defined a motivic spectrum representing Hodge filtered de Rham cohomology, we can define motivic spectra representing Hodge and de Rham cohomology. 

\begin{definition}
We denote by
\[
\Hod^{\ast}(S) \colonequals \mathrm{gr}(\mathrm{H}^{\ast} \dR(S)) \in \CAlg(\Gr(\Mod_{\mathrm{H}\Zb}(\MS_S)))
\] 
the \emph{Hodge cohomology} graded motivic spectrum. 
\end{definition}

\begin{definition}
We define
\[
\dR(S) \colonequals \varinjlim \mathrm{H}^{\ast} \dR(S) \in \CAlg(\Mod_{\mathrm{H}\Zb}(\MS_S)) 
\] 
as the \emph{de Rham cohomology} motivic spectrum.
\end{definition}

We note that by restriction of scalars along $\kgl \to \mathrm{H}\Zb$, we obtain (filtered, graded) algebras in $\DM^\kgl_S$.

\begin{remark}
\label{remark:hdr_and_cousins_represent_de_rham_cohomology}
The terminology of \cref{definition:hodge_filtered_de_rham_cohomology_and_its_cousins} is justified in the following way: If $X$ is a smooth $S$-scheme, then \cref{remark:formula_for_sheaf_defined_by_our_motivic_spectra} yields an identification of filtered $\ZZ$-modules 
\begin{equation}
\label{equation:mapping_spectrum_into_hodge_filtered_de_rham_cohomology}
\map_{\MS_{S}}(\Sigma^{\infty}_{\Tbf} X_{+}, \Sigma^{a}_{\Tbf} \mathrm{H}^{\ast} \dR(S)) \simeq \R\Gamma(X, \Omega^{\geq \ast+a}_{-/S}[2a]).
\end{equation}
In other words, $\mathrm{H}^{\ast} \dR(S)$ represents Hodge-filtered de Rham cohomology of smooth $S$-schemes. Note that by allowing $X$ to vary, we obtain an equivalence 
\[
\Omega^{\infty}_{\Tbf} \Sigma^{a}_{\Tbf} \mathrm{H}^{\ast} \dR(S) \simeq \Omega^{\geq +a}_{-/S}[2a]
\]
of filtered Nisnevich sheaves of spectra. In an analogous sense, $\dR(S)$ and $\mathrm{Hdg}^{\ast}(S)$ represent, respectively, de Rham cohomology and Hodge cohomology. 
\end{remark}

\begin{remark}
Suppose that $X$ is a smooth $S$-scheme. Since $\Sigma^{\infty}_{\Tbf}(-)_{+} \colon \Sm_{S} \rightarrow \MS_{S}$ is symmetric monoidal, $\Sigma^{\infty}_{\Tbf} X_{+}$ has an induced cocommutative coalgebra structure and consequently 
\[
\map_{\MS_{S}}(\Sigma^{\infty}_{\Tbf} X_{+}, \mathrm{H}^{\ast} \dR(S)) \simeq \mathrm{filmap}_{\Fil_\up(\MS_{S})}(
\Sigma^{\infty}_{\Tbf} X_{+}
\{ 0 \} , \mathrm{H}^{\ast} \dR(S)), 
\] 
where on the right we take the mapping spectrum and allow the target to vary, while on the left $(-)\{0\}$ denotes the free filtered motivic spectrum generated in degree zero and $\mathrm{filmap}$ the internal filtered mapping spectrum, has a canonical structure of a commutative algebra in filtered spectra. This is compatible with \cref{remark:hdr_and_cousins_represent_de_rham_cohomology} in the sense that when $a = 0$, (\ref{equation:mapping_spectrum_into_hodge_filtered_de_rham_cohomology}) is an equivalence of commutative algebras. 
\end{remark}

\subsection{Statement of the result}

In this section, we introduce the relevant notation and state \cref{thm:DeligneCompare}, which identifies the weight filtration on logarithmic de Rham cohomology obtained from \cref{thm:IndependenceOfWts} with the décalage of the pole order filtration on global sections. We prove the result in the next section. 

\begin{definition}
Let $(X,D)$ be a strict normal crossing divisor pair of dimension $n$ over $S$. The  \emph{logarithmic motive} is the total fibre 
\[
    \Mbf_{\kgl}(X,D) \colonequals \tfib_{I \subset [r]} (\Mbf_{\kgl}(D_I)(-n)^\vee) \in \DM^\kgl_{S,\res}
\]
of the hypercube given by the dual of the Tate-twisted diagram of the canonical inclusions between $D_I = \bigcap_{i \in I} D_i$. 
\end{definition}

We note that by construction, the logarithmic motive is resolvable. 

\begin{remark}[{Logarithmic motive and the open complement}]
\label{remark:logarithmic_motives_in_terms_of_the_open_complement}
As a consequence of the Morel--Voevodsky purity theorem \cite{morel:1999}, the logarithmic motive can be described as 
\[
\Mbf_{\kgl}(X,D) \simeq \Mbf^{\Abf^1}_{\kgl}(U) 
\]
the \emph{$\Abf^{1}$-invariant} motive\footnote{Note that since our motives are $\kgl$-linear, the motive of a smooth projective variety is automatically $\mathbf{A}^{1}$-invariant.} of $U = X \setminus D$, see \cite[Lemma~6.24]{AHI:atiyah}. In particular, it is an invariant of $U$.
\end{remark}

\begin{notation}
Let $(X, D)$ be a strict normal crossing divisor pair over $S$. We write 
\[
W_*\dR(S)(\Mbf_{\kgl}(X,D)) \in \Fil_{\up}(\D(S)) 
\]
for the filtered object obtained by applying \cref{thm:IndependenceOfWts} to the functor 
\[
\dR(S)(-) \colonequals \map_{\DM^{\kgl}_{S}}(-, \dR(S)) \colon (\DM^{\kgl}_{S, \res})^{op} \rightarrow \D(S)
\]
and the standard t-structure on $\D(S)$. Here, $\dR(S)$ stands for the de Rham cohomology motivic spectrum of \cref{definition:hodge_filtered_de_rham_cohomology_and_its_cousins}. Analogously, we write 
\begin{align*}
W_*\mathrm{H}^{\ast} \dR(S)(\Mbf_{\kgl}(X,D)) \in \Fil_\up(\DF(S))  \\ 
W_*\Hod^{\ast}(S)(\Mbf_{\kgl}(X,D)) \in \Fil_\up(\DGr(S)) 
\end{align*}
for the objects obtained, respectively, using Hodge filtered de Rham cohomology and the degreewise Whitehead filtration\footnote{Beware that the choice of the t-structure is important. The $\infty$-category of filtered objects supports a plethora of interesting t-structures (for example, the Beilison t-structure which is used to define décalage).  However, it is the degreewise Whitehead t-structure on Hodge filtered de Rham cohomology which we compare to the pole order filtration.} on $\DF(S)$, and Hodge cohomology and the degreewise Whitehead filtration on $\DGr(S)$. 
\end{notation}

\begin{notation}
\label{notation:d_filtered_cohomology}
Let $(X,D)$ be an sncd pair over a scheme $S$. We write 
\begin{align}
D_* \R\Gamma(X; \Omega^\bullet_{X/S}(\log D)) &:= \Dec(P)_*\R\Gamma(X; \Omega^\bullet_{X/S}(\log D)) \in \Fil_\up(\D(S)) \\
D_* \R\Gamma(X; \Omega^{\geq *}_{X/S}(\log D)) &:= \Dec(P)_*\R\Gamma(X; \Omega^{\geq *}_{X/S}(\log D)) \in \Fil_\up(\DF(S)) \\
D_* \R\Gamma(X; \Omega^*_{X/S}(\log D)) &:= \Dec(P)_*\R\Gamma(X; \Omega^*_{X/S}(\log D)) \in \Fil_\up(\DGr(S)) 
\end{align}
for the filtrations on logarithmic de Rham cohomology (resp. logarithmic Hodge filtered de Rham cohomology, logarithmic Hodge cohomology) obtained by taking the filtration induced by the pole order filtration $P_*$ on global sections, still denoted by $P_*$, and where for an increasing filtration $F_*$
\begin{equation}
\Dec(F)_n = \varinjlim (\tau^B_{\geq -n} (F_*))
\end{equation}
is the increasing variant of décalage from \cite[{Construction 4.5}]{antieau:decalage}.
\end{notation}

\begin{remark}
For further background and the relationship between décalage and spectral sequences, see \cite{antieau:decalage}. 
\end{remark}

Having set up the notation, we now state the comparison result, which we prove in the next section.  
  
\begin{theorem}
\label{thm:DeligneCompare}
Let $(X,D)$ be a projective sncd pair over an affine Dedekind scheme $S$. Then, there exist equivalences 
\begin{align}
D_* \R\Gamma(X; \Omega^\bullet_{X/S}(\log D)) &\simeq W_*\dR(S)(\Mbf_\kgl(X,D)) \\
D_* \R\Gamma(X; \Omega^{\geq *}_{X/S}(\log D)) &\simeq  W_* \mathrm{H}^{*} \dR(S)(\Mbf_\kgl(X,D)) \\
D_* \R\Gamma(X; \Omega^*_{X/S}(\log D)) &\simeq W_*\Hod^*(S)(\Mbf_\kgl(X,D))
\end{align}
that are contravariantly functorial in divisor enlargements of projective sncd pairs. In other words, the décalaged pole-order filtration coincides with the weight filtration of \cref{thm:IndependenceOfWts} obtained from the (degreewise) Whitehead filtration. 
\end{theorem}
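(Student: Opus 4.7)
The plan is to use the universal property of \cref{prop:PureToRes}: any exact functor out of $\DM^{\kgl}_{S,\res}$ is determined by its restriction to the additive subcategory $\Pure^{\kgl}_S$. Accordingly, I would produce an additive functor $\Pure^{\kgl}_S \to \Fil_{\up}(\D(S))$ whose exact extension to $\DM^{\kgl}_{S,\res}$ agrees with $W_*\dR(S)(-)$ on perfect pure motives---which is the Whitehead filtration by construction---and whose value on the resolvable motive $\mathbf{M}_{\kgl}(X,D)$ is the décalaged pole-order filtration. The same strategy would apply in parallel to the Hodge-filtered and Hodge-graded variants.

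To compute $W_*\dR(S)(\mathbf{M}_{\kgl}(X,D))$ explicitly, I would unfold the total fibre defining $\mathbf{M}_{\kgl}(X,D)$: by exactness, this becomes the total cofibre in $\Fil_{\up}(\D(S))$ of the hypercube $\{\tau_{\geq -*}\dR(S)(\mathbf{M}_{\kgl}(D_I)(-n)^{\vee})\}_{I \subset [r]}$, and each term is, via Poincaré duality, a cohomological shift of $\tau_{\geq -*}\R\Gamma(D_I, \Omega^{\bullet}_{D_I/S})$ whose shift is determined by $|I|$. On the other side, the pole-order filtered logarithmic de Rham complex is precisely the object encoded by the lattice diagram $\mathfrak{res}$ of \cref{construction:residue_and_localization_sequence_diagrams}: this diagram exhibits $P_*\Omega^{\bullet}_{X/S}(\log D)$ as built from the de Rham complexes of the intersections $D^{(j)}$ via iterated residue maps, whose $j$th pole-order graded piece is given by \cref{lem:filres_graded_split}.

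To complete the identification I would match these two hypercubes term-by-term, using Poincaré duality to align the Tate twists on the motivic side with the cohomological shifts coming from the degrees of forms on the de Rham side, and naturality to match motivic inclusion/Gysin maps with the residue maps of \cref{lem:filres}. The main obstacle is the décalage step, which converts the pole-order filtration into the Whitehead filtration: because the $j$th pole-order graded piece places the cohomology of $D^{(j)}$ in cohomological degree $j$, décalage turns this cohomological grading into filtration index, matching the Whitehead filtration of each perfect pure piece. I would verify this matching directly on the perfect pure motives $\mathbf{M}_{\kgl}(D_I)(-n)^{\vee}$, where the restricted pole-order filtration is trivial and décalage produces exactly the Whitehead truncation of $\R\Gamma(D_I, \Omega^{\bullet}_{D_I/S})$, and then invoke uniqueness of exact extensions to conclude. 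The Hodge-filtered and Hodge-graded versions follow identically, since the lattice diagram $\mathfrak{res}$ and the residue sequences of \cref{lem:filres} are compatible with the Hodge filtration.
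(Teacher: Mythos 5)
Your overall strategy is the paper's: compute $W_*\dR(S)(\Mbf_\kgl(X,D))$ as a total cofibre of Whitehead-truncated terms indexed by $I \subset [r]$, and show that décalage of the pole-order filtration produces the same hypercube. But the step ``match these two hypercubes term-by-term'' hides a genuine gap. The lattice diagram $\mathfrak{res}$ of \cref{construction:residue_and_localization_sequence_diagrams} does \emph{not} exhibit $P_*\Omega^\bullet_{X/S}(\log D)$ in terms of the plain de Rham complexes of the intersections: its entries are $\Omega^{a-n^+(\underline\alpha)}_{D^+_{\underline\alpha}/S}(\log(D - D^{\pm}(\underline\alpha)))$, which still carry logarithmic poles along the restrictions of the remaining components of $D$. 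So the hypercube you extract from the residue sequences has the wrong vertices to match against $\tau_{\geq -\ast}\R\Gamma(D_I;\Omega^\bullet_{D_I/S})^\vee[-2n]$, and no bookkeeping of shifts fixes this. The paper's resolution (\cref{proposition:formula_for_decalaged_pole_order_filtration}) is to dualize: the filtered Poincaré duality of \cref{thm:PoincarePlus} (Tsuji), promoted to a duality of \emph{functors} on $\SNCD_{X/S}$ via \cref{prop:functorial_duals}, exchanges the residue hypercube for the \emph{localization} hypercube $\mathfrak{loc}$ of compactly supported logarithmic forms, and it is the latter whose relevant face consists of the plain complexes $\Omega^\bullet_{D_I/S}$. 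Your only invocation of Poincaré duality is to ``align Tate twists with cohomological shifts,'' which is the easy monoidality statement (\cref{lemma:mapping_into_hodge_filtered_de_rham_strongly_monoidal}); the essential use is this exchange of residue for localization sequences.

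Two smaller points. First, the identification must be made in $\Fil_\up(\D(S))$, not merely in $\D(S)$: your décalage discussion correctly identifies what happens at each vertex (\cref{lem:DecTurnsInsToWhitehead}, together with the shift along residue maps of \cref{lem:decfilres}), but to conclude that two filtered hypercubes agree once their underlying hypercubes and vertexwise Whitehead filtrations do, the paper appeals to full faithfulness of the Whitehead tower functor (\cref{theorem:whitehead_miracle}). Second, your opening framing---producing an additive functor on $\Pure^\kgl_S$ whose exact extension takes the prescribed value on $\Mbf_\kgl(X,D)$---cannot serve as a shortcut: the décalaged pole-order filtration is defined only on sncd pairs and is not known to factor through $\DM^\kgl_{S,\res}$ (the paper notes it fails even Zariski descent), so uniqueness of exact extensions yields nothing until the hypercube computation is carried out by hand, which is where all the content lies.
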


\begin{remark}[Naturality and multiplicativity of the comparison equivalence]\label{rem:OurFailures}
The proof strategy adopted in this article does not directly yield functoriality or multiplicativity of the equivalences of \cref{thm:DeligneCompare} with respect to arbitrary morphisms of sncd pairs. Indeed, the argument proceeds by identifying both sides of the comparison with the total cofibre of a common hypercube diagram, and the functoriality of this construction is limited by the functoriality properties of the underlying hypercube.

In \cref{quest:ResMots}, we outline an alternative approach based on describing the category of resolvable motives in a geometric manner, analogous to the construction of $\MS_S$ or $\mathrm{logSH}_S$ from schemes, but using only projective sncd pairs. 
This is motivated by the fact that the décalaged pole-order filtration is not a sheaf, making it impossible to enhance it into a functor out of $\mathrm{logSH}_S$. 
A purely geometric presentation of resolvable motives would allow one to package the décalaged pole-order filtration as a functor out of resolvable motives, and thereby obtain functorial comparison equivalences for maps of sncd pairs as a consequence of \cref{prop:PureToRes}.
\end{remark}

\begin{corollary}
Let $S$ be an affine Dedekind scheme. Then logarithmic de Rham cohomology (resp. logarithmic Hodge filtered de Rham cohomology, logarithmic Hodge cohomology) of projective sncd pairs over $S$, together with the décalaged pole-order filtration, is an intrinsic invariant of $U$ which does not depend on the choice of a good compactification. 
\end{corollary}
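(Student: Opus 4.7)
The plan is to invoke the universal property of \cref{prop:PureToRes}, which states that an exact functor on $\DM^\kgl_{S,\res}$ is determined by its restriction to perfect pure motives. Concretely, I would construct an exact functor $F^D \colon \DM^\kgl_{S,\res} \to \Fil_\up(\D(S))$ with two properties: (a) $F^D(\Mbf_\kgl(X,D)) \simeq D_*\R\Gamma(X; \Omega^\bullet_{X/S}(\log D))$, and (b) on perfect pure motives $M$, $F^D(M) \simeq \tau_{\geq -*}\dR(S)(M)$. Property (b) together with \cref{thm:IndependenceOfWts} identifies $F^D$ canonically with $W_*\dR(S)$; combining with (a) yields the theorem. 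The Hodge-filtered de Rham and Hodge cohomology cases are handled by the same argument with additional indexing.

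To construct $F^D$, I would first use the filtered residue exact sequences of \cref{lem:filres}, iterated along the components $D_1,\ldots,D_r$ of $D$, to realize $P_*\R\Gamma(X; \Omega^\bullet_{X/S}(\log D))$ as the total cofibre of a hypercube in $\DF(S)$ whose combinatorics mirror the diagram $\mathfrak{res}$ of \cref{construction:residue_and_localization_sequence_diagrams}. Each vertex of this cube is the pole-order-filtered global sections of a logarithmic de Rham complex of a smaller sncd pair, with an appropriate shift in filtration degree coming from the residue. Via Atiyah duality \cite[Corollary~5.15]{AHI:atiyah} and the self-duality between logarithmic and compactly-supported logarithmic forms \cite[Lemma~9.19]{esnault-viehweg}, this hypercube is identified as the image, under an exact functor $F^P \colon \DM^\kgl_{S,\res} \to \DF(S)$, of the dualized hypercube of Tate-twisted motives $\Mbf_\kgl(D_I)(-n)^\vee$ defining $\Mbf_\kgl(X,D)$. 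Post-composing with the décalage functor $\Dec$ produces $F^D$, which by construction satisfies (a).

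Property (b) reduces, after tracking the shifts through the construction, to the classical identification of the décalage of a trivial (single-jump) increasing filtration with the Whitehead filtration of the underlying complex, cf. \cite[Construction~4.5]{antieau:decalage}. Indeed, on a perfect pure motive $M = \Mbf_\kgl(Y)(j)$ the relevant pole-order filtration collapses because $Y$ carries no boundary divisor, so $F^P(M)$ is the trivially-filtered complex $\dR(S)(M)$, and applying $\Dec$ recovers $\tau_{\geq -*}\dR(S)(M)$ as required.

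The main obstacle is the first step: carefully aligning the filtered hypercube coming from the iterated residue sequences with the motivic hypercube defining $\Mbf_\kgl(X,D)$, compatibly with Atiyah duality and all Tate-twist shifts. Once this alignment is secured, the remaining steps---functoriality of décalage, the identification on perfect pure motives, and the invocation of the universal property---are essentially formal.
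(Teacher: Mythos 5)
Your proposal is, in substance, a sketch of the proof of \cref{thm:DeligneCompare} — the identification of the décalaged pole-order filtration with the weight filtration for a \emph{fixed} projective sncd pair $(X,D)$ — and as such it follows essentially the same route as the paper: the hypercube of filtered residue sequences, Poincaré duality between logarithmic and compactly supported logarithmic forms, the observation that décalage of a trivially inserted filtration recovers the Whitehead filtration, and the universal property of the weight filtration on resolvable motives. Modulo the alignment details you flag yourself, that part is a reasonable outline of the theorem's proof (compare \cref{proposition:formula_for_decalaged_pole_order_filtration}).

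As a proof of the stated corollary, however, there is a genuine gap: nothing in your argument addresses why the answer is independent of the choice of good compactification, which is the entire content of the corollary. The equivalence $D_*\R\Gamma(X;\Omega^\bullet_{X/S}(\log D)) \simeq W_*\dR(S)(\Mbf_\kgl(X,D))$ only transfers the dependence on $(X,D)$ from the left-hand side into the logarithmic motive $\Mbf_\kgl(X,D)$; a priori, two good compactifications $(X,D)$ and $(X',D')$ of the same $U$ could yield non-equivalent logarithmic motives, in which case the comparison would say nothing about invariance. The missing — and in fact the only necessary — ingredient is \cref{remark:logarithmic_motives_in_terms_of_the_open_complement}: by Morel--Voevodsky purity one has $\Mbf_\kgl(X,D) \simeq \Mbf^{\Abf^1}_\kgl(U)$, so the logarithmic motive is intrinsic to the open part $U$. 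Since \cref{thm:DeligneCompare} is already established immediately before the corollary, the corollary is precisely this one-line observation; your proposal reproves the theorem but omits the step that makes the corollary a corollary.
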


\begin{proof}
Since the right hand side only depends on the logarthmic motive, this is immediate from \cref{remark:logarithmic_motives_in_terms_of_the_open_complement}. 
\end{proof}

\begin{remark}
If we forget the filtrations, the first equivalence recovers  \cite[Proposition~6.27]{AHI:atiyah}. In the case of Hodge filtered de Rham and Hodge cohomology, \cref{thm:DeligneCompare} is new even after forgetting the filtrations. 
\end{remark}

\subsection{Proof of the comparison} 

In this section, we prove \cref{thm:DeligneCompare}. We begin by showing that the $D_*$-filtration on the relevant logarithmic cohomologies can be computed as a total cofibre of a cube involving only Whitehead filtrations. To do so, we make the following observations. 

\begin{lemma}
\label{lem:DecTurnsInsToWhitehead}
If the divisor $D$ is empty, then the $D_*$-filtration on logarithmic de Rham cohomology (resp. logarithmic Hodge filtered de Rham cohomology, resp. logarithmic Hodge cohomology) coincides with the (degreewise, negatively indexed) Whitehead filtration. 
\end{lemma}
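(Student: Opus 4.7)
The plan is to observe that when $D = \emptyset$, the pole-order filtration is essentially trivial, so that the décalage construction automatically unwinds into the Whitehead filtration.

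First, I would record the simplification that occurs when $D$ is empty: the sheaves of logarithmic differentials reduce to $\Omega^{a}_{X/S}(\log D) = \Omega^{a}_{X/S}$, and the pole-order filtration $P_{\ast}$ collapses to a filtration concentrated in weight zero. The cleanest way to see this is via the formula $\mathrm{gr}_{j}^{P} \Omega^{a}_{X/S}(\log D) \simeq \tau_{j\ast}\Omega^{a-j}_{D^{(j)}/S}$: since $D^{(j)} = \emptyset$ for all $j \geq 1$, all higher graded pieces vanish, while $\mathrm{gr}_{0}^{P} = \Omega^{a}_{X/S}$. Equivalently, $P_{k}\Omega^{a}_{X/S} = \Omega^{a}_{X/S}$ for $k \geq 0$ and vanishes for $k < 0$.

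Next, after applying $R\Gamma(X,-)$ (respectively the Hodge filtered or Hodge refinement), the resulting filtered object is of the form $Y\{0\}$, where $Y = R\Gamma(X; \Omega^{\bullet}_{X/S})$ (respectively $R\Gamma(X; \Omega^{\geq\ast}_{X/S})$ or $R\Gamma(X; \Omega^{\ast}_{X/S})$) is placed in filtration weight zero, i.e.\ obtained via the right adjoint to evaluation at $0$. The remaining task is then to verify that the décalage of such a trivially-filtered object coincides with the Whitehead filtration of the underlying object. This is essentially formal from the formula $\Dec(F)_{n} = \varinjlim \tau^{B}_{\geq -n}(F_{\ast})$ of \cite{antieau:decalage}: the Beilinson t-structure tests on graded pieces, and since $Y\{0\}$ has a unique nontrivial graded piece in weight $0$, we get $\tau^{B}_{\geq -n}(Y\{0\}) \simeq (\tau_{\geq -n} Y)\{0\}$, and passing to the colimit in filtration degree yields $\Dec(P)_{n} \simeq \tau_{\geq -n} Y$, which is the negatively indexed Whitehead filtration on $Y$.

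For the Hodge filtered and Hodge cohomology statements, the exact same argument applies verbatim, with the only change being that the relevant t-structure on $\DF(S)$ or $\DGr(S)$ is the degreewise Whitehead t-structure. The main step, therefore, is really just to carefully unwind the definition of the Beilinson t-structure on a filtered object concentrated in a single weight; there is no serious obstacle beyond this bookkeeping.
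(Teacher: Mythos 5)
Your argument is correct and is essentially the paper's proof, just written out in more detail: the paper likewise observes that for empty $D$ the pole-order filtration is the free insertion of the de Rham complex into filtered degree $0$, and that décalage turns this insertion filtration into the Whitehead filtration. Your extra unwinding of the Beilinson truncation on an object concentrated in a single graded degree is a fine way to justify that last step.
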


\begin{proof}
If the logarithmic structure is trivial, then the $P_*$-filtration coincides with the filtration obtained by free insertion to filtered degree 0. The claim follows from the fact that décalage turns this filtration into the Whitehead filtration.
\end{proof}

\begin{lemma}
\label{lem:decfilres}
Applying décalage to the filtered exact sequence of \cref{lem:filres} induces a cofibre sequence
\begin{equation}
D_* \R\Gamma(X; \Omega^{\geq a}_{X/S}(\log (D-D_1))) \to D_* \R\Gamma(X; \Omega^{\geq a}_{X/S}(\log D)) \to \big(D_{*-2} \R\Gamma(X; \Omega^{\geq a - 1}_{D_1/S}(\log (D-D_1) \vert_{D_1}))\big)[-1]
\end{equation}
in $\Fil^{\down}(\D(S))$. Analogous cofibre sequences in the filtered derived category are induced also for logarithmic de Rham cohomology and for logarithmic Hodge cohomology.
\end{lemma}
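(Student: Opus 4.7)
The plan is to obtain the desired cofibre sequence by applying derived global sections and the décalage functor to the residue sequence of \cref{lem:filres}, after lifting the latter to the level of Hodge-filtered cochain complexes.

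First, I would lift the form-degree-wise residue maps to a short exact sequence of filtered Hodge-filtered cochain complexes of sheaves,
\[
0 \to P_*\Omega^{\geq a}_X(\log(D-D_1)) \to P_*\Omega^{\geq a}_X(\log D) \to i_* P_{*-1}\Omega^{\geq a-1}_{D_1}(\log(D-D_1)|_{D_1})[-1] \to 0.
\]
The cohomological shift $[-1]$ on the third term reflects that residue lowers form degree by one; the Hodge-filtration index shift $a \mapsto a-1$ comes from the same fact; and the pole-order filtration index shift $\ast \mapsto \ast-1$ is from \cref{lem:filres}. That this indeed assembles into a morphism of cochain complexes uses the standard compatibility of residue with the de Rham differential.

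Second, I would apply $\R\Gamma(X,-)$ and then $\Dec$. Both are exact on the filtered derived category: the first because derived pushforward is triangulated, and the second because $\Dec$ can be identified with a symmetric monoidal equivalence between the filtered derived category equipped with the standard and Beilinson t-structures, see \cite{antieau:decalage}. This produces a cofibre sequence whose third term is
\[
\Dec\bigl(P_{*-1}\R\Gamma(D_1, \Omega^{\geq a-1}_{D_1}(\log(D-D_1)|_{D_1}))[-1]\bigr).
\]

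The main technical step is to identify this as $D_{*-2}\R\Gamma(D_1, \Omega^{\geq a-1}_{D_1}(\log(D-D_1)|_{D_1}))[-1]$. Using Deligne's explicit formula $(\Dec F)^p C^n = F^{p+n}C^n \cap d^{-1}(F^{p+n+1}C^{n+1})$, translated to the increasing-filtration convention, one checks two properties of $\Dec$: (i) equivariance under filtration-index shifts, i.e.\ $\Dec(F_{*-1}) = \Dec(F)_{*-1}$; and (ii) a non-trivial interaction with cohomological shifts of the underlying complex — if $F^{[-1]}$ denotes the filtered object obtained by shifting the underlying complex by $[-1]$ without re-indexing the filtration, then $\Dec(F^{[-1]}) = \Dec(F)_{*-1}[-1]$, which picks up an \emph{extra} filtration shift of $-1$ on top of the cohomological shift. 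Combining (i) and (ii) for an object which is simultaneously index-shifted and complex-shifted yields the desired $*-2$ paired with $[-1]$. The analogous statements for logarithmic de Rham cohomology and logarithmic Hodge cohomology follow by the same strategy, with the Hodge filtration dropped or replaced by the form-degree grading. The main obstacle is precisely this shift computation: without the identity (ii) above, one would naively expect the filtration shift $*-1$ rather than the correct $*-2$.
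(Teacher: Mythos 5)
Your overall strategy --- lift the residue sequence to a short exact sequence of Hodge-filtered, pole-order-filtered complexes, apply $\R\Gamma$, apply $\Dec$, and track the index shifts --- is the paper's strategy, and your shift bookkeeping is correct and matches the paper: the identity $\Dec(F_{*-1})=\Dec(F)_{*-1}$ together with the interaction of $\Dec$ with the cohomological shift $[-1]$ accounts for the total filtration shift $*-2$. However, there is a genuine gap at the step ``then apply $\Dec$, which is exact.'' Décalage is \emph{not} an exact functor on the filtered derived category, and the justification you give (that $\Dec$ is ``a symmetric monoidal equivalence between the filtered derived category equipped with the standard and Beilinson t-structures'') is not a correct statement. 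By definition $\Dec(F)_n = \varinjlim \tau^B_{\geq -n}(F_*)$ is assembled from the truncation functors of the Beilinson t-structure, and t-structure truncations do not preserve cofibre sequences. Concretely, by \cref{lem:DecTurnsInsToWhitehead} the functor $\Dec$ applied to a trivially filtered object returns its Whitehead tower, and the Whitehead tower functor is visibly not exact: apply it to a cofibre sequence of the form $M \to 0 \to M[1]$ with $M$ in the heart and truncate at level $1$ to get $0 \to 0 \to M[1]$. Since insertion into filtration degree $0$ is exact, $\Dec$ cannot be.

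The missing ingredient is \cref{lem:filres_graded_split}: the filtered residue sequence splits on associated graded pieces. Because the homotopy objects of the Beilinson t-structure are computed from the graded pieces, this splitting forces the relevant connecting maps to vanish, so each Beilinson truncation $\tau^B_{\geq n}$ of the sequence is again a cofibre sequence, and passing to the colimit shows that $\Dec$ preserves \emph{this particular} cofibre sequence. Without the degreewise splitting (or at least the vanishing of the connecting maps in the Beilinson heart), the step where you commute $\Dec$ past the cofibre sequence is unjustified; this is precisely the point the paper's proof is organized around.
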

\begin{proof}
By \cref{lem:filres}, we have a level-wise exact sequence
\begin{equation}
0 \to P_* \Omega^{\geq a}_{X/S} (\log (D-D_1)) \to P_* \Omega^{\geq a}_{X/S} (\log D) \to P_{*-1} \Omega^{\geq a-1}_{X/S} (\log (D-D_1)\vert_{D_1})[-1] \to 0    
\end{equation}
of chain complexes of filtered sheaves. Thus, applying the global sections functor $\R\Gamma$ to it, we obtain a cofibre sequence in $\DF(S)$. By \cref{lem:filres_graded_split}, this cofibre sequence induces split cofibre sequences on graded pieces. Thus, applying the Beilinson truncation functor $\tau^B_{\geq n}$ (see \cite[Definition~3.24]{antieau:decalage}) to it results in a cofibre sequence for all $n \in \Zb$. As décalage is defined using Beilinson truncations \cite[Construction~4.5]{antieau:decalage}, the claim for logarithmic Hodge filtered de Rham cohomology follows. Note that the residue morphism shifts the $D_*$ filtration by $-2$, since if $F_* C$ is a filtered complex, then
\begin{equation}
\Dec(F_*)(C[-1]) = (F_{*-1}C)[-1].
\end{equation}
The same proof strategy works for logarithmic de Rham and logarithmic Hodge cohomology.
\end{proof}

We will also need the following filtered enhancement of Poincaré duality for logarithmic de Rham cohomology, originally proven by Tsuji \cite{tsuji:1999}. Before stating and proving the result, we introduce the following notation.

\begin{notation}
By $\Fc\{i\} \in \DF(S)$ (resp. $\Fc(i) \in \DGr(S)$) we mean the object obtained by freely placing $\Fc$ into filtered (resp. graded) degree $i$ if $\Fc$ was not already filtered (resp. graded), or the object obtained by shifting the filtration (resp. grading) by $i$ degrees if $\Fc$ was already filtered (resp. graded).
\end{notation}

\begin{notation}
\label{notation:sncd_category}
For a smooth $S$-scheme $X$, we let $\SNCD_{X/S}$ be the category 
\begin{enumerate}
\item whose objects are relative sncd pairs $(X,D)$ with underlying scheme $X$,
\item and where the maps are divisor enlargements $j \colon (X,D) \to (X,D')$, where $D' \subset D$. 
\end{enumerate}
Note that variants of logarithmic de Rham cohomology induce functors $\SNCD_{X/S}^\op \to \D(S)$ whose structure morphisms are pullbacks $j^*$, whereas the compactly supported variants induce functors $\SNCD_{X/S} \to \D(S)$ whose structure morphisms are pushforwards $j_*$, see \cref{recollection:residue_and_localization_sequences}. 
\end{notation}

\begin{theorem}[Poincaré duality]
\label{thm:PoincarePlus}
Let $S$ be a scheme and let $X$ be a smooth projective $S$-scheme of relative dimension $n$. Then the trace map from Serre duality $\tr \colon \R\Gamma(X;\Omega^{n}_{X/S,c}(\log D)[-n]) \to \Oc_S[-2n]$ lifts to a trace map
\begin{equation}
    \tr \colon \R\Gamma(X;\Omega^{\geq *}_{X/S,c}(\log D)) \to \Oc_S[-2n]\{n\} \in \DF(S).
\end{equation}
Together, these trace maps induce perfect pairings
\begin{align}
    \R\Gamma(X;\Omega^\bullet_{X/S}(\log D)) \times \R\Gamma(X;\Omega^\bullet_{X/S,c}(\log D)) &\xto{\land} \R\Gamma(X;\Omega^\bullet_{X/S,c}(\log D)) \xto{\tr} \Oc_S[-2n] \in \D(S) \label{eq:dRpairing}\\
    \R\Gamma(X;\Omega^{\geq *}_{X/S}(\log D)) \times \R\Gamma(X;\Omega^{\geq *}_{X/S,c}(\log D)) &\xto{\land} \R\Gamma(X;\Omega^{\geq *}_{X/S,c}(\log D)) \xto{\tr} \Oc_S[-2n]\{n\} \in \DF(S) \label{eq:HodFilpairing}\\
    \R\Gamma(X;\Omega^*_{X/S}(\log D)) \times \R\Gamma(X;\Omega^{*}_{X/S,c}(\log D)) &\xto{\land} \R\Gamma(X;\Omega^*_{X/S,c}(\log D)) \xto{\tr} \Oc_S[-2n](n) \in \DGr(S) \label{eq:Hodpairing}.
\end{align}
Moreover, each of these pairings admits a canonical enhancement that is bivariantly functorial with respect to divisor enlargements, yielding a functor
\[
\Tw(\SNCD_{X/S}) \to \D^{\Delta^1},
\qquad
j \longmapsto \bigl((\omega,\rho) \mapsto \tr(j^*\omega \wedge \rho)\bigr),
\]
where $\D$ denotes $\D(S)$, $\DF(S)$, or $\DGr(S)$, respectively. 
In particular, they induce natural duality equivalences
\begin{equation}
\R\Gamma(X;\Omega^\bullet_{X/S}(\log D))
\xrightarrow{\simeq}
\R\Gamma (X;\Omega^\bullet_{X/S,c}(\log D))^\vee
\colon
\SNCD_{X/S}^{\op} \to \D.
\end{equation}
\end{theorem}

\begin{proof}
The first claim is \cite[Proposition~3.1]{tsuji:1999}.\footnote{To be more precise, Tsuji states Proposition~3.1 only in the case where $S$ is a spectrum of a discrete valuation ring. We suspect that the proof of Proposition~3.1, given in \cite[\S 4]{tsuji:1999}, works over a general base scheme, but for compleneteness we supply here an alternative argument, by showing that the first claim also follows from Atiyah duality of \cite[Corollary~5.15]{AHI:atiyah}. Indeed, we claim the desired trace map is the composition 
\[
\R\Gamma(X;\Omega^{\geq *}_{X/S,c}(\log D)) \xto{j_*} \R\Gamma(X;\Omega^{\geq *}_{X/S,c}) \to \Oc_S[-2n]\{n\}
\]
where the second map is the pullback along the Tate-twisted dual of the structure map $\mathbf{1}_S(n) \to \Mbf_\kgl(X)$, at least up to multiplying with an invertible global function on $X$, which can easily be undone as the external differential $d$ vanishes on such functions by \cite[Tag~0G8H]{stacks}. 

It suffices to show that, after we pass to graded pieces (i.e., to Hodge cohomology), we recover the trace map of Grothendieck--Serre duality. We can ignore $j_*$ as is the identity for $\Omega^n_{X/S,c}(\log (-))$. As $\Hod^*(S) \colon \DM^\kgl_S \to \DGr(S)$ is symmetric monoidal, our trace morphism is defined by taking duals with respect to a perfect pairing, so the claim follows from the uniqueness of trace up to a multiplication by an invertible function \cite[Tag~0G8I]{stacks}. 
} The perfectness of the pairing of (\ref{eq:Hodpairing}) is Grothendieck--Serre duality. Because both filtered complexes in (\ref{eq:HodFilpairing}) are complete with respect to the filtration, and because passing to graded pieces is symmetric monoidal, perfectness of (\ref{eq:HodFilpairing}) follows from that of (\ref{eq:Hodpairing}). The perfectness of (\ref{eq:dRpairing}) then follows by forgetting the filtration. The fact that the bivariant enhancement of the pairing induces a functorial comparison is \cref{prop:functorial_duals}.
%
\end{proof}

Using Poincaré duality, we will derive an explicit formula for the $D_*$-filtration in terms of the Whitehead filtration. To do so, we will make use of a following categorical construction on cubes in stable $\infty$-categories. 

\begin{recollection}
As in \cref{notation:labeling_of_the_square}, we denote the square category by $\sq = \Delta^{1} \times \Delta^{1}$ and label its vertices using $\{ -1, 0, \ast, 1 \}$, with $-1$ and $1$ corresponding to, respectively, the initial and final vertices. We recall from \cite[Definition 1.1.1.4]{HA} that if $\ccat$ is a stable $\infty$-category, then a diagram $P \colon \sq \rightarrow \ccat$ is a \emph{cofibre sequence} if it is a pushout and $P(\ast) = 0_{\ccat}$. 
\end{recollection}

\begin{definition}
\label{definition:hypercolumn_cofibre_diagram} 
Let $\ccat$ be a stable $\infty$-category. We say that a functor 
\[
P \colon \sq^{\times r} \rightarrow \ccat 
\]
is \emph{hypercolumn cofibre} if its hypercolumns are cofibre sequences; that is, if for every $1 \leq i \leq r$, every choice of vertices $\beta_{k}$ of the square for $k \neq i$, the resulting functor 
\[
P(\beta_{1}, \ldots, \beta_{i-1}, -, \beta_{i+1}, \ldots, \beta_{r}) \colon \sq \rightarrow \ccat 
\]
is a cofibre sequence. 
\end{definition}

\begin{example}
\label{example:hypercolumn_cofibre_diagram_from_residue_sequences}
By equipping the residue diagram of \cref{construction:residue_and_localization_sequence_diagrams} with the pole-order filtration, considering it as valued in the filtered derived $\infty$-category and taking décalage, we obtain a diagram
\[
\mathrm{sq}^{\times r} \to \Fil(\D(S)), 
\]
of $D_*$-filtered logarithmic de Rham cohomology. Here, $\mathrm{sq} = \Delta^{1} \times \Delta^{1}$ is the square category as in \cref{notation:labeling_of_the_square}. As a consequence of \cref{lem:decfilres}, every hypercolumn of this diagram is a cofibre sequence. 

An analogous construction applied to logarithmic Hodge filtered de Rham and logarithmic Hodge cohomology yields diagrams $\mathrm{sq}^{\times r} \to \Fil(\DF(S))$ and $\mathrm{sq}^{\times r} \to \Fil(\DGr(S))$. These also have hypercolumns given by cofibre sequences. 
\end{example}

\begin{remark}
If $\ccat$ is a stable $\infty$-category, then the full subcategory 
\[
\Fun^{\mathrm{hpcofib}}(\sq^{\times r}, \ccat) \subseteq \Fun(\sq^{\times r}, \ccat) 
\]
spanned by hypercolumn cofibre diagrams is a thick subcategory. In particular, it is stable. 
\end{remark}

\begin{remark}
\label{remark:inductive_definition_of_a_hypercolumn_cofibre_diagram}
If $r = 0$, then the condition of \cref{definition:hypercolumn_cofibre_diagram} is vacuous and any diagram $\sq^{\times 0} \rightarrow \ccat$ is a hypercolumn cofibre sequence. Such a diagram can be identified with a choice of an object of $\ccat$. 

If $r \geq 1$, then the decomposition $\sq^{\times r} \simeq \sq \times \sq^{\times r-1}$ associates to a diagram $P \colon \sq^{\times r} \rightarrow \ccat$ a curried diagram 
\[
\widetilde{P} \colon \sq \rightarrow \Fun(\sq^{\times r-1}, \ccat). 
\]
Then $P$ is hypercolumn cofibre if and only if $\widetilde{P}$
\begin{enumerate}
    \item is a cofibre sequence and 
    \item takes values in the subcategory $\Fun^{\mathrm{hpcofib}}(\sq^{\times r-1}, \ccat) \subseteq \Fun(\sq^{\times r-1}, \ccat)$ of hypercolumn cofibres.
\end{enumerate}
This gives an alternative, inductive definition of hypercolumn cofibres.
\end{remark}

\begin{construction}
\label{construction:shifted_n_cube}
If $\ccat$ is a stable $\infty$-category, then both of the restriction functors 
\[
\begin{tikzcd}
	{\Fun(\Delta^1, \ccat) } & {} & {\Fun^{\mathrm{cofib}}(\sq, \ccat)} & {} & {\Fun(\Delta^1, \ccat) }
	\arrow["{\res_{\{-1, 0\}}}", from=1-3, to=1-1]
	\arrow["{\res_{\{0, 1\}}}"', from=1-3, to=1-5]
\end{tikzcd}
\]
to the $1$-simplex spanned by either $\{ -1, 0 \}$ or $\{ 0, 1 \}$, are equivalences of $\infty$-categories. Their inverses are given by completing a morphism to, respectively, either a cofibre or fibre sequence. An inductive application of this statement and \cref{remark:inductive_definition_of_a_hypercolumn_cofibre_diagram} show that, more generally, for any $r \geq 0$ the restriction functors 
\[
\begin{tikzcd}
	{\Fun((\Delta^1)^{\times r}, \ccat) } & {} & {\Fun^{\mathrm{hpcofib}}(\sq^{\times r}, \ccat)} & {} & {\Fun((\Delta^1)^{\times r}, \ccat) }
	\arrow["{\res_{\{-1, 0\}^{\times r}}}", from=1-3, to=1-1]
	\arrow["{\res_{\{0, 1\}^{\times r}}}"', from=1-3, to=1-5]
\end{tikzcd}
\]
are equivalences. We will refer to the composite autoequivalence 
\[
\mathrm{sh} \colonequals \res_{\{-1, 0\}^{\times r}} \circ (\res_{\{0, 1\}^{\times r}})^{-1} \colon \Fun((\Delta^1)^{\times r}, \ccat) \rightarrow \Fun((\Delta^1)^{\times r}, \ccat)
\]
as the hypercube \emph{shift}. 
\end{construction}

\begin{example}
Unwrapping the definiton, we see that the shifted hypercube of \cref{construction:shifted_n_cube} can be calculated by iterated fibres. For example, in the case of $r = 2$ it associates to a square 
\[
\begin{tikzcd}
	{X_{0, 0}} & {X_{0, 1}} \\
	{X_{1, 0}} & {X_{1, 1}}
	\arrow[from=1-1, to=1-2]
	\arrow[from=1-1, to=2-1]
	\arrow[from=1-2, to=2-2]
	\arrow[from=2-1, to=2-2]
\end{tikzcd}
\]
the square 
\[
\begin{tikzcd}
	{\mathrm{fib}(\mathrm{fib}(X_{0, 0} \rightarrow X_{0, 1}) \rightarrow \mathrm{fib}(X_{1, 0} \rightarrow X_{1, 1}))} & {\mathrm{fib}(X_{0, 0} \rightarrow X_{1, 0})} \\
	{\mathrm{fib}(X_{0, 0} \rightarrow X_{0, 1})} & {X_{0, 0}}
	\arrow[from=1-1, to=1-2]
	\arrow[from=1-1, to=2-1]
	\arrow[from=1-2, to=2-2]
	\arrow[from=2-1, to=2-2]
\end{tikzcd}
\]
\end{example}

\begin{remark}
\label{remark:total_cofibre_of_the_shift_functor}
By induction on $r$, one can verify that given a hypercube $P \colon (\Delta^{1})^{\times r} \rightarrow \ccat$, there is a canonical equivalence 
\[
\tcofib(\mathrm{sh}(P)) \simeq P(1, \ldots, 1) 
\] 
between the total cofibre of the shifted hypercube and the value of $p$ at the final vertex. 
\end{remark}

\begin{remark}
\label{remark:hypercube_shift_forward_a_desuspension_of_a_shift_backward}
Let $p \colon (\Delta^{1})^{\times r} \rightarrow \ccat$ be a hypercube diagram, where as is standard we identify $\Delta^{1}$ with the poset $\{ 0, 1 \}$. Then we have canonical equivalences 
\[
(\mathrm{sh}(p))(0, \ldots, 0) \simeq (\mathrm{sh}^{-1}(p))(1, \ldots, 1)[-r]
\]
of objects of $\ccat$. This follows by induction from the observation that in a stable $\infty$-category, the fibre of a morphism can be identified with the desuspension of the cofibre. 
\end{remark}

\begin{proposition}
\label{proposition:formula_for_decalaged_pole_order_filtration}
Let $(X,D)$ be a projective sncd pair over a scheme $S$. Then,
\begin{align}
    D_* \R\Gamma(X; \Omega^\bullet_{X/S}(\log D)) &\simeq \tcofib_{I \subset [r]} (\tau_{\geq -\ast}\R\Gamma(D_I; \Omega^\bullet_{D_I/S})^\vee[-2n]) \in \Fil_\up(\D(S)) \\
    D_* \R\Gamma(X; \Omega^{\geq *}_{X/S}(\log D)) &\simeq \tcofib_{I \subset [r]} (\tau_{\geq -\ast}\R\Gamma(D_I; \Omega^{\geq *}_{D_I/S})^\vee[-2n]\{n\}) \in \Fil_\up(\DF(S)) \\
    D_* \R\Gamma(X; \Omega^*_{X/S}(\log D)) &\simeq \tcofib_{I \subset [r]}(\tau_{\geq -\ast}\R\Gamma(D_I; \Omega^{*}_{D_I/S})^\vee[-2n](n)) \in \Fil_\up(\DGr(S)), 
\end{align}
naturally in divisor enlargements, where $\tau_{\geq \ast}$ denotes the Whitehead filtration, and the structure maps on the hypercube diagram indexed by $I$ are given by duals of pullbacks.
\end{proposition}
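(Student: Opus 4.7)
The plan is to proceed by induction on $r$, proving all three statements simultaneously; I focus on case (b), the Hodge-filtered case, since (a) and (c) follow respectively by forgetting the Hodge filtration and by passing to associated graded. For the base case $r = 0$ we have $D = \emptyset$, so the pole order filtration on $\Omega^{\geq *}_{X/S}$ is trivial, and \cref{lem:DecTurnsInsToWhitehead} identifies the LHS with $\tau_{\geq -*}\R\Gamma(X; \Omega^{\geq *}_{X/S})$. The RHS is the single vertex $\tau_{\geq -*}\R\Gamma(X; \Omega^{\geq *}_{X/S})^\vee[-2n]\{n\}$, and the filtered Poincaré duality of \cref{thm:PoincarePlus} specialized to $D = \emptyset$ (where $\Omega^{\geq *}_{X/S,c} = \Omega^{\geq *}_{X/S}$) identifies the underlying object with $\R\Gamma(X; \Omega^{\geq *}_{X/S})$; applying $\tau_{\geq -*}$, which is a functor of the underlying object, yields the needed equivalence of filtered objects.

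For the inductive step, write $D = D' + D_r$ with $D'$ having $r-1$ components, and apply \cref{lem:decfilres} (and its $\Omega^\bullet$ and $\Omega^*$ variants) along $D_r$ to obtain the cofibre sequence in $\Fil_\up(\DF(S))$:
\[
D_*\R\Gamma(X; \Omega^{\geq *}_{X/S}(\log D')) \to D_*\R\Gamma(X; \Omega^{\geq *}_{X/S}(\log D)) \to D_{*-2}\R\Gamma(D_r; \Omega^{\geq *-1}_{D_r/S}(\log D'|_{D_r}))[-1].
\]
On the RHS, viewing the $r$-cube as a morphism of $(r-1)$-cubes along the axis $r \in J$ versus $r \notin J$, the total cofibre decomposes analogously as the cofibre of the induced map between two $(r-1)$-total cofibres. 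The inductive hypothesis applied to $(X, D')$ of dimension $n$ and to $(D_r, D'|_{D_r})$ of dimension $n - 1$---using the identification $\bigcap_{i \in I}(D_i \cap D_r) = D_{I \cup \{r\}}$---identifies these term-by-term with the corresponding entries of the above cofibre sequence.

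The remaining task, and the main expected obstacle, is to verify that the connecting map in the cofibre sequence of \cref{lem:decfilres} matches, under Poincaré duality, the structure map of the $r$-cube along axis $r$. Both are induced by the closed immersions $D_{I \cup \{r\}} \hookrightarrow D_I$ for $I \subset [r-1]$; compatibility follows from the naturality of Poincaré duality in the sncd category, which is the content of the final clause of \cref{thm:PoincarePlus}. The various shifts must then conspire precisely: the cohomological shift $[-1]$ and filtration shift $-2$ from \cref{lem:decfilres} are exactly the discrepancy between Poincaré duality on $X$ and on the stratum $D_r$ of dimension $n - 1$, so that the global $[-2n]\{n\}$ shifts in the RHS entries propagate correctly through the inductive step.
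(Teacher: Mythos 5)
Your induction is a legitimate reorganization of the argument, and the shift bookkeeping in your inductive step does come out correctly: the face $\{r \in I\}$ of the cube, identified via the inductive hypothesis for $(D_r, D'|_{D_r})$, is $D_{*-2}\R\Gamma(D_r;\Omega^{\geq *-1}_{D_r/S}(\log D'|_{D_r}))[-2]$, which is exactly the rotated third term of the cofibre sequence of \cref{lem:decfilres}. However, there is a genuine gap at the step you yourself flag as the main obstacle. The face $\{r \in I\}$ carries the shift $[-2n]\{n\}$ coming from duality on the ambient $X$, while your identification of that face uses Poincaré duality \emph{on the stratum $D_r$} (of dimension $n-1$). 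To match the map between the two faces --- duals, with respect to $\tr_X$, of the pullbacks along $D_{I\cup\{r\}}\hookrightarrow D_I$ --- with the boundary map of the residue sequence, you need a compatibility between the trace map of $X$ and the trace map of $D_r$ across the residue map, i.e.\ a Gysin-type statement of the shape $\tr_{D_r}\circ\res \simeq \tr_X$ up to the shift $[-2]\{1\}$. This does \emph{not} follow from the final clause of \cref{thm:PoincarePlus}: that clause asserts naturality of the pairing over $\SNCD_{X/S}$, that is, for varying divisors on a \emph{fixed} ambient scheme, and says nothing about comparing the dualities of $X$ and of its strata. The compatibility you need is true, but it is an additional nontrivial input that the paper never establishes and that your proof would have to supply.

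The paper's argument is arranged precisely to avoid this. It never invokes Poincaré duality on any stratum $D_I$: it dualizes the entire logarithmic hypercube $A$ on $X$ in one step (using only the $\SNCD_{X/S}$-naturality of \cref{thm:PoincarePlus} to identify $A^\vee$ with the compactly supported hypercube with pushforward transition maps), then computes the inverse shift $\mathrm{sh}^{-1}(A^\vee)$ using the localization sequences of \cref{construction:residue_and_localization_sequence_diagrams}, which live entirely on $X$ with stratum terms given by the pushforwards $i_*\Omega^a_{D_I/S}$. Dualizing once more produces the abstract $\Oc_S$-linear duals $\R\Gamma(D_I;-)^\vee[-2n]\{n\}$ of the statement, with a single uniform shift and no appeal to duality on $D_I$. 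If you wish to keep the inductive structure, you should either prove the trace compatibility for the closed immersion $D_r\hookrightarrow X$, or restate the inductive hypothesis in the undualized, compactly supported form (as the total cofibre of the localization hypercube), so that duality is applied only once, on $X$, at the very end.
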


\begin{proof}
We prove the claim for logarithmic Hodge filtered de Rham cohomology, as the proof for the other two cohomology theories is essentially the same. Let $(X,D)$ be a projective sncd pair over $k$, and let $D_1 + \cdots + D_r$ be the decomposition of $D$ into its components. We will identify 
\[
\mathrm{P}(\{ 1, \ldots, r \}) \simeq (\Delta^{1})^{\times r} 
\]
the poset of subsets of $\{ 1, \ldots, r \}$ with the $r$-dimensional hypercube. We then have a hypercube diagram 
\[
A \colon (\Delta^{1})^{\times r} \rightarrow \Fil^{\down}(\DF(S))
\]
given by 
\[
I \mapsto D_* \R\Gamma (X; \Omega^{\geq *}_{X/S}(\log D(I)))  
\]
where $D(I) = \sum_{i \in I} D_i$ and where the transition maps are given by pullbacks $j^*$ of \cref{recollection:residue_and_localization_sequences}. 

Let $G \colonequals \mathrm{sh}(A)$ be the Gysin hypercube obtained from $A$ by applying the shift functor of \cref{construction:shifted_n_cube}. The residue exact sequence diagram of \cref{example:hypercolumn_cofibre_diagram_from_residue_sequences} provides a hypercube cofibre extension of $A$ and thus its appropriate restriction can be identified with the inverse shift $\mathrm{sh}^{-1}(A)$. Using this identification and \cref{remark:hypercube_shift_forward_a_desuspension_of_a_shift_backward} one sees that on objects, $G$ is given by 
\begin{equation}
\R\Gamma(D_{J}; \Omega^{\geq * - |J|}_{D_{J}/S})[-2|J|] = \R\Gamma(D_{J}; \Omega^{\geq *}_{D_{J}/S})[-2|J|]\{|J|\},
\end{equation}
where we write $J \colonequals \{ 1, \ldots, r \} \setminus I$ for the complement. Moreover, by \cref{lem:DecTurnsInsToWhitehead} the resulting filtration on these values are given by degreewise Whitehead filtrations. As a consequence of \cref{remark:total_cofibre_of_the_shift_functor}, we have 
\[
G(1, \ldots, 1) \simeq D_* \R\Gamma (X; \Omega^{\geq *}_{X/S}(\log D)) \simeq \tcofib(G).
\]
Thus, to prove the needed result, it suffices to identify $G$ with the hypercube in the statement of the proposition.

Since the Whitehead filtration functor is fully faithful as we observe in \cref{theorem:whitehead_miracle}, it suffices to identify the two hypercubes after forgetting the $D_*$-filtration. By Poincaré duality of \cref{thm:PoincarePlus}, the dual $A^{\vee}$ is the hypercube diagram given by 
\[
I \mapsto \Omega^{\geq *}_{X/S,c}(\log D(I))[2n]\{-n\},
\]
with transition map given by pushforwards $j_*$ of \cref{recollection:residue_and_localization_sequences}. Moreover, the hypercube of localization sequences of \cref{construction:residue_and_localization_sequence_diagrams} identifies $\mathrm{sh}^{-1}(A^{\vee})$ with the hypercube diagram 
\begin{equation}
I \mapsto \Omega^{\geq *}_{D_I/S}[2n]\{-n\}.
\end{equation}
with transition maps given by pullback. Since
\[
(\mathrm{sh}^{-1})(A^{\vee}))^{\vee} \simeq \mathrm{sh}(A^{\vee})^{\vee}) \simeq \mathrm{sh}(A) \simeq G, 
\]
this identifies $G$ with the hypercube as in the statement. As all steps in the argument were natural with respect to divisor enlargements, the naturality of the equivalences follows. 
\end{proof}

\begin{lemma}
\label{lemma:mapping_into_hodge_filtered_de_rham_strongly_monoidal}
Let $\mathrm{H}^* \dR(S) \in \CAlg(\Fil^{\down}(\DM^\kgl_S))$ be the Hodge filtered de Rham cohomology motivic spectrum of \cref{definition:hodge_filtered_de_rham_cohomology_and_its_cousins}. Then the lax symmetric monoidal structure on the mapping spectrum functor 
\[
\mathrm{H}^* \dR(S)(-) \colon (\DM^\kgl_{S, \res})^{op} \to \Fil^{\down}\DF(S).
\]
determined by the algebra structure of $\mathrm{H}^* \dR(S)$ is (strongly) symmetric monoidal. In particular, it takes duals to duals. 
\end{lemma}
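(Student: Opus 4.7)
The plan is to verify that the canonical lax monoidal structure maps
\[
\mathrm{H}^* \dR(S)(M) \otimes \mathrm{H}^* \dR(S)(N) \to \mathrm{H}^* \dR(S)(M \otimes N)
\]
and the unit map $\mathbf{1} \to \mathrm{H}^* \dR(S)(\mathbf{1}_S)$ are equivalences for all $M, N \in \DM^\kgl_{S, \res}$. The unit condition is essentially immediate: by construction (see \cref{definition:hodge_filtered_de_rham_cohomology_and_its_cousins} and \cref{remark:hdr_and_cousins_represent_de_rham_cohomology}), $\mathrm{H}^*\dR(S)(\mathbf{1}_S) \simeq \mathrm{H}^*\dR(S)$ with its filtered algebra structure is precisely the monoidal unit of $\Fil^{\down}\DF(S)$ identified with the Hodge filtered de Rham cohomology of $S$ itself.

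For the binary structure maps, the key observation is that both sides are exact functors of $M$ and of $N$ separately (the source has the opposite stable structure, and the tensor product of filtered objects in $\DF(S)$ is biexact). Thus, by applying \cref{prop:PureToRes} in each variable, it suffices to check that the Künneth map is an equivalence when $M$ and $N$ are perfect pure motives, and, since both sides are additive in each variable, it further suffices to treat the case where $M = \Mbf_\kgl(X)(a)$ and $N = \Mbf_\kgl(Y)(b)$ for smooth projective $S$-schemes $X, Y$ and integers $a, b$.

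In this case, using that $\dR(S)$ is orientable so that Tate twists simply shift the filtration and the cohomological degree, the structure map is identified with the Hodge filtered Künneth morphism
\[
\R\Gamma(X, \Omega^{\geq *}_{X/S}) \otimes^{L}_{\Oc_S} \R\Gamma(Y, \Omega^{\geq *}_{Y/S}) \to \R\Gamma(X \times_S Y, \Omega^{\geq *}_{X \times_S Y / S}).
\]
This is an equivalence of filtered complexes: on associated graded pieces it reduces, via the standard decomposition $\Omega^a_{X \times_S Y / S} \simeq \bigoplus_{i+j=a} p_1^* \Omega^i_{X/S} \otimes p_2^* \Omega^j_{Y/S}$, to the usual Künneth formula for coherent cohomology of smooth proper $S$-schemes, which holds by flat base change and perfectness of the pushforwards along smooth proper morphisms. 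Since both filtered complexes involved are bounded below filtered objects whose graded pieces match, the map on filtrations is itself an equivalence.

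The final sentence, that a strong symmetric monoidal functor between closed symmetric monoidal $\infty$-categories takes dualizable objects to dualizable objects and preserves duals, is a standard categorical consequence of the uniqueness of duals together with the fact that strong monoidal functors preserve the evaluation and coevaluation maps defining a duality. The main point of the argument is therefore the reduction to smooth projective $S$-schemes and the invocation of the filtered Künneth formula there; everything else is formal.
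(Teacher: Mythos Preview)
Your proof is correct and follows essentially the same approach as the paper's: reduce to perfect pure motives using that both sides are exact in each variable and that perfect pures generate $\DM^\kgl_{S,\res}$ as a thick subcategory, handle Tate twists via the orientation, and then invoke the Hodge filtered Künneth formula for smooth projective $S$-schemes. You add a little more detail than the paper does (explicitly checking the unit, and sketching why the filtered Künneth map is an equivalence via associated graded pieces), but the core argument is identical. One small remark: your appeal to \cref{prop:PureToRes} is slightly imprecise---that proposition is about \emph{extending} additive functors, whereas what you actually need is the more elementary fact that a natural transformation between exact functors which is an equivalence on a thick generating class is an equivalence everywhere; the paper just states this directly.
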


\begin{proof}
In the case of motives $\Mbf_{\kgl}(X), \Mbf_{\kgl}(X')$ of smooth projective $S$-schemes $X, X'$, the canonical comparison map
\[
\mathrm{H}^* \dR(S)(-) \otimes_{\mathcal{O}_{S}} \mathrm{H}^* \dR(S)(-) \rightarrow \mathrm{H}^* \dR(S)(- \otimes -) 
\]
can be identified with 
\[
\R\Gamma(X, \Omega^{\geq \ast}_{X/S}) \otimes_{\mathcal{O}_{S}} \R\Gamma(X', \Omega^{\geq \ast}_{X'/S}), \rightarrow \R\Gamma(X \times_{S} X', \Omega^{\geq \ast}_{X \times_{S} X'/S}), 
\]
which is an equivalence. Since Tate twists are taken to an appropriate twist in filtered spectra, namely 
\begin{equation}
\label{equation:hodge_filtered_de_rham_of_tate_twists}
\mathrm{H}^*\dR(S)\big((-)(-a)\big) \simeq  \mathrm{H}^*\dR(S)(-)[2a]\{-a\}
\end{equation}
for every $a \in \mathbb{Z}$, this comparison map is also an equivalence when both inputs are perfect pure. Since $\mathrm{H}^{*} \dR(S)(-)$ is exact in each variable, and perfect pure motives generate $\DM^{\kgl}_{S, \res}$ as a thick subcategory, we deduce that the functor is strongly monoidal. 
\end{proof} 

\begin{proof}[Proof of \cref{thm:DeligneCompare}]
We prove the claim for logarithmic Hodge filtered de Rham cohomology, as the proof for the other two cohomology theories is essentially the same. Since $W_{*} \mathrm{H}^{*} \dR(S)(-)$ is exact, by the definition of the logarithmic motive we have 
\[
W_* \mathrm{H}^*\dR(S)(\Mbf_\kgl(X,D)) \simeq \tcofib_{I \subset [r]}  W_* \mathrm{H}^*\dR(S)(\Mbf_\kgl(D_I)(-n))^\vee\big) \in \Fil_\up(\DF(S)).
\]
Using that $ \Mbf_\kgl(D_I)(-n))^\vee$ are perfect pure since the latter are closed under taking duals and Tate twists, by the defining property of $W_*$ this is equivalent to 
\[
\tcofib_{I \subset [r]}  \tau_{\geq - \ast} \big( \mathrm{H}^*\dR(S)(\Mbf_\kgl(D_I)(-n))^\vee\big). 
\]
Using (\ref{equation:hodge_filtered_de_rham_of_tate_twists}) to evaluate $\mathrm{H}^{*} \dR(S)(-)$ on the Tate twists and \cref{lemma:mapping_into_hodge_filtered_de_rham_strongly_monoidal} to commute it with taking duals, we can further rewrite this as 
\[
W_* \mathrm{H}^*\dR(S)(\Mbf_\kgl(X,D)) \simeq \tcofib_{I \subset [r]} \tau_{\geq - \ast}\big(\R\Gamma(D_I; \Omega^{\geq *}_{D_I/S})^\vee[-2n]\{n\}\big).
\]
This equivalence is clearly natural in divisor enlargemnets. The left hand side can be naturally identified with $D_* \R\Gamma(X; \Omega^{\geq *}_{X/S}(\log D)) \in \Fil_\up(\DF(S))$ by \cref{proposition:formula_for_decalaged_pole_order_filtration}, ending the argument. 
\end{proof}
\section{Further Topics}
\label{sect:misc}

In this last section, we give examples, describe a variant of the weight filtration in the setting of singular schemes, and list a few open problems which arose during the writing of the current work. 

\subsection{Examples}

We compute several examples of invariants that can be obtained from the weight filtration. Before doing so, we make the following definition.

\begin{definition}
Let $(X,D)$ be a strict normal crossing divisor pair of dimension $n$ over $S$. The \emph{compactly supported logarithmic motive} is the Tate twisted dual $\Mbf_{\kgl}(X,D)^\vee(n)$ of the logarithmic motive, i.e., the total cofibre 
\[
    \Mbf_{\kgl,c}(X,D) \colonequals \tcofib_{I \subset [r]} (\Mbf_{\kgl}(D_I)) \in \DM^\kgl_{S,\res},
\]
of the hypercube of the canonical inclusions between $D_I = \bigcap_{i \in I} D_i$.
\end{definition}

\begin{example}\label{ex:bdry}
Let $S$ be an affine Dedekind scheme, let $R$ be a ring, and let $\mathrm{H}R \in \SH_S$ be the motivic spectrum representing the $R$-coefficient motivic cohomology. In this case, for a projective and connected sncd pair $(X,D)$ over $S$, we have that
\begin{align}
    \Gr^W_0 \mathrm{H}R(\Mbf_{\kgl,c}(X,D)) &= \tfib_{I \subset [r]} \Gamma(D_I;\underline{R}) \\
    &\simeq \widetilde{\mathrm{H}}^\bullet(\Delta(D); R)
\end{align}
is the $R$-coefficient reduced singular cohomology of the dual complex $\Delta(D)$ associated to the snc divisor $D$ (see e.g. \cite[\S 2]{payne:2013} for the definition of $\Delta(D)$).
\end{example}

As $\Mbf_{\kgl,c}(X,D)$ is an invariant of $U = X-D$, we have proven the following result, which over a perfect field follows also from Thuillier's result stating that the homotopy type of $\Delta(D)$ is an invariant of $U$ \cite{thuillier:2007}.

\begin{theorem}
Let $U$ be a smooth scheme over an affine Dedekind scheme $S$. If $U$ admits a good projective compactification over $S$, then the singular cohomology complex of its dual complex is an invariant of $U$.
\end{theorem}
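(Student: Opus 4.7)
The plan is to chain together three invariance statements already established in the excerpt. First, I would invoke the identification
\[
\Mbf_{\kgl}(X,D) \;\simeq\; \Mbf^{\Abf^1}_{\kgl}(U) \quad \in \DM^{\kgl}_{S,\res}
\]
from Remark 3.19 (a consequence of Morel--Voevodsky purity) to conclude that the logarithmic motive depends only on the open part $U = X - D$, not on the choice of good projective compactification $(X,D)$. Since $U$ determines the relative dimension $n = \dim_S X = \dim_S U$, the compactly supported logarithmic motive
\[
\Mbf_{\kgl,c}(X,D) \;=\; \Mbf_{\kgl}(X,D)^{\vee}(n)
\]
is likewise a functor of $U$ alone.

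Second, I would apply Theorem 3.1 to the $\kgl$-linear functor represented by $\mathrm{H}R$ (and the standard $t$-structure on $\D(S)$, say). Since this produces a weight filtration $W_\ast \mathrm{H}R(-)$ defined on all resolvable motives purely in terms of the motive, the filtered object
\[
W_\ast \mathrm{H}R(\Mbf_{\kgl,c}(X,D)) \;\in\; \Fil_\up(\D(S))
\]
depends only on $\Mbf_{\kgl,c}(X,D)$, and hence only on $U$. Passing to the $0$-th graded piece, $\Gr^W_0 \mathrm{H}R(\Mbf_{\kgl,c}(X,D))$ is an invariant of $U$.

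Third, I would invoke the computation of Example 5.2, which identifies this graded piece (up to the reduced/unreduced distinction, which for a connected $U$ affects only a constant summand determined by $\pi_0 U$) with
\[
\widetilde{\mathrm{H}}^\bullet\bigl(\Delta(D);\, R\bigr),
\]
the reduced singular cohomology of the dual complex of $D$. Combining the previous two steps, this cohomology is an invariant of $U$, which is what had to be shown.

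The content of the argument is almost entirely bookkeeping: the heavy lifting --- the purity identification of the logarithmic motive with the $\Abf^1$-invariant motive of $U$, the existence of the weight filtration on resolvable motives, and the explicit computation of the weight-zero graded piece --- has already been carried out. The only point worth verifying carefully is that dualizing and Tate-twisting preserves invariance of the motive under varying the compactification; but $n$ is determined by $U$ and the operations $(-)^{\vee}$ and $(n)$ on $\DM^{\kgl}_{S,\res}$ are intrinsic, so there is nothing further to check.
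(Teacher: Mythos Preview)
Your proposal is correct and follows essentially the same approach as the paper. The paper's proof is a single sentence---``As $\Mbf_{\kgl,c}(X,D)$ is an invariant of $U = X-D$, we have proven the following result''---which packages together exactly the three ingredients you list: the purity identification of the logarithmic motive with $\Mbf^{\Abf^1}_{\kgl}(U)$ (hence invariance of its twisted dual $\Mbf_{\kgl,c}(X,D)$), the intrinsic dependence of $W_\ast$ and $\Gr^W_0$ on the resolvable motive alone, and the computation of $\Gr^W_0 \mathrm{H}R(\Mbf_{\kgl,c}(X,D))$ as the reduced singular cohomology of $\Delta(D)$.
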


In particular, if $X \in \Sch$ is projective, and admits a strong resolution of singularities $(X',E)$ over $S$, then the singular cohomology of $\Delta(E)$ is an invariant of $X$ because $X_\sm \cong X'-E$.

The next example illustrates that the weight-zero pieces contain more than just combinatorial information about the boundary complex.

\begin{example}
Let $(X,D)$ be and sncd pair over $S$. Then,
\begin{equation}
    \Gr^W_{j+i} \Hod^j(S)(\Mbf_{\kgl,c}(X,D)) = \tfib_{I \subset [r]} \R\Gamma^i(D_I;\Omega_{D_I/S}^j)[-j-i] \in \D(S).
\end{equation} 
In particular, the right hand side is an invariant of $U = X-D$.

For example, consider the divisors $D_1, D_2 \subset \Pbf^2_{\Fb_5}$ cut out by $x_0^2 - 2x_2^2$ and $x_1-x_2$, and let $D = D_1+D_2$. The only global functions on $\Pb^2_{\Fb_5},$ $D_1$, and $D_2$ are the elements of $\Fb_5$, but the global functions on $D_1 \cap D_2$ form a degree-two field extension over $\Fb_5$. Thus,
\begin{equation}\label{eq:ExplicitExampleWtOfHodge}
    \Gr^W_0 \Hod^0(\Fb_5)(\Mbf_{\kgl,c}(X,D)) \simeq \Fb_5[-2] \in \D(\Fb_5).
\end{equation}
From this we can conclude that $U = \Pbf^2_{\Fb_5} - D$ does not admit a good projective compactification with only one boundary component, as such a compactification could not result in a $\Gr^W_0 \Hod^0(\Fb_5)$ as above.
\end{example}

\subsection{Weight filtrations on singular schemes}

We have described a method endowing various at-the-characteristic cohomology theories of smooth schemes with weight filtrations, thereby producing new invariants of such schemes. Here, we show that our methods also apply to singular schemes.

Let $S$ be a base scheme. Due to the work of Cisinski~\cite{cisinski:2019}, it is known that there is a fully faithful, symmetric monoidal  embedding
\begin{equation}
    (-)^\cdh \colon \SH_S \hook \SH^\cdh_S
\end{equation}
where $\SH^\cdh_S$ is the variant of stable $\Abf^1$-homotopy category obtained by using cdh sheaves on the category $\Sch_S$ of finitely presented $S$-schemes (see e.g. \cite{khan:cdh}). In particular, an $\Abf^1$-invariant cohomology theory $E$ on $\Sm_S$ has an extension into an $\Abf^1$-invariant, $\cdh$-local cohomology theory $E^\cdh$ on $\Sch_S$. By symmetric monoidality and \cref{prop:PureToRes}, $\DM^\kgl_{S,\res} \subset \Mod_{\kgl^\cdh}(\SH^\cdh_S)$. If $X \in \Sch_S$, we denote by $\Mbf_\kgl^\cdh(X)$ the $\cdh$-local $\Abf^1$-invariant $\kgl^\cdh$-motive of $X$.

Suppose then that $X \in \Sch_S$ is such that $\Mbf_{\kgl}^\cdh(X)$ is resolvable. This is the case for example if the reduction $X_\red$ admits a proper birational map $\pi \colon X' \to X_\red$ such that $X'$ is a smooth $S$-scheme that admits a good projective compactification over $S$, and $\pi^{-1} X_\sing$ is an snc divisor. We call such a resolution a \emph{strong compactifiable resolution of singularities over $S$.} Then \cref{thm:IndependenceOfWts} applies to it so we can endow various cohomologies of $X$ with a weight filtration. We illustrate this general principle with the following example.

\begin{example}[``du Bois'' theory in positive and mixed characteristic]\label{ex:dB}
Let $S$ be an affine Dedekind scheme and consider the functors out of $\DM^\kgl_{S,\res}$ given by $W_*\dR(S), W_*\mathrm{H}^*\dR(S),$ and $W_*\Hod^*(S)$. They define cohomology theories on resolvable motives satisfying that
\begin{enumerate}
    \item if $X$ is smooth and projective over $S$, then the cohomology evaluated on $\Mbf^\cdh_\kgl(X)$ coincides with de Rham cohomology (resp. Hodge filtered de Rham cohomology, resp. Hodge cohomology) of $X$ over $S$ with the negatively indexed Whitehead filtration;
    \item if $U$ is smooth over $S$ and admits a projective sncd compactification $(X,D)$ over $S$, then the cohomology evaluated on $\Mbf^\cdh_\kgl(U)$ coincides with the logarithmic de Rham cohomology (resp. logarithmic Hodge filtered de Rham cohomology, resp. logarithmic Hodge cohomology) of $(X,D)$ over $S$ with the décalaged pole-order filtration (\cref{thm:DeligneCompare});
    \item if $Y \in \Sch_S$ is singular but admits a strong compactifiable resolution of singularities over $S$, then the cohomology evaluated on $\Mbf^\cdh_\kgl(Y)$ may be computed using this resolution and $\cdh$-descent. The cohomology and its weight filtration do not depend on the choice of such a resolution.
\end{enumerate}
The cohomology theory represented by $W_*\Hod^*(S)$ on singular schemes with resolvable motives is reminiscent of the cohomology of the du Bois complex from characteristic 0 \cite{dubois:1981}. Park and Popa have recently used the dimensions of the cohomology groups of the du Bois complex (\textit{Hodge-du Bois numbers}) in order to study singularities in characteristic 0 \cite{park-popa:hodge_symmetry, park-popa:qfactoriality}. The authors wonder if $W_*\dR(S), W_*\mathrm{H}^*\dR(S),$ and $W_*\Hod^*(S)$ could be similarly used to study singularities in positive and mixed characteristic. In \cref{ex:singular_example}, we compute the weight-filtered $W_*\Hod^*(S)$ for projective cones over smooth varieties.
\end{example}

\begin{remark}[$\Abf^1$-colocalization]\label{rem:A1colocalization}
We have chosen to state the results of this article without using the ($\kgl$-linear) \textit{$\Abf^1$-colocalization} functor from \cite[\S 6]{AHI:atiyah}, i.e., the right adjoint $(-)^\dagger \colon \DM^\kgl_S \to \DM^{\kgl, \Abf^1}_S$ of the inclusion $\DM^{\kgl, \Abf^1}_S \hook \DM^\kgl_S$. Another way to state \cref{thm:DeligneCompare} and \cref{ex:dB} would be that $\dR(S)^\dagger$ and $\dR(S)^{\dagger,\cdh}$ represent an $\Abf^1$-invariant Nisnevich-local cohomology theory on $\Sm_S$, and an $\Abf^1$-invariant cdh-local cohomology theory on $\Sch_S$, respectively, such that on resolvable motives it admits a weight filtration that  can be related to logarithmic de Rham cohomology with décalaged pole-order filtration (and similarly for Hodge filtered de Rham and Hodge cohomology).
\end{remark}

\begin{example}[Cones over smooth projective schemes]\label{ex:singular_example}
Let $X$ be smooth and projective over a Dedekind domain $A$, and suppose for simplicity that the only global functions are the elements of $A$. Let $\overline{C}(X,\Lc)$ be the projective cone associated to an ample line bundle, and let 
\begin{equation}
\begin{tikzcd}\label{eq:ConeRes}
    X \arrow[r,hook]  \arrow[d] & \Pbf_X(\Lc \oplus \Oc) \arrow[d] \\
    \Spec(A) \arrow[r,hook] & \overline{C}(X,\Lc)
\end{tikzcd}
\end{equation}
be the resolution of singularities over $A$ obtained by blowing up the cone point. As \cref{eq:ConeRes} is an abstract blowup square, we may use it to compute that
\begin{equation}
    \Gr^W_{j+i} \Hod^j(A)(\Mbf^\cdh_{\kgl}(\overline{C}(X,\Lc))) = 
\begin{cases}
A& \text{if $i = j = 0$;}\\
\R\Gamma^{i-1}(X; \Omega^{j-1}_{X/A})][-j-i] & \text{otherwise.}
\end{cases}
\end{equation}
\end{example}

\subsection{Open problems}

In this section, we list several question which arose naturally during the writing of the current work, and which we believe would be interesting to pursue on their own right. 

\begin{quest}[{Crystalline comparison}]
\emph{Does the weight filtration introduced in the current work coincide with the décalaged pole-order filtration on crystalline cohomology of \cite{mokrane:1993,nakkajima2008weight}?}
\end{quest}

\begin{quest}[{Derived generalization}]
\emph{Does there exist a generalization of notions of compactly-supported differential forms to derived schemes so that Poincaré duality of \cref{thm:PoincarePlus} holds? What about the pole-order filtration and \cref{proposition:formula_for_decalaged_pole_order_filtration}?} 
\end{quest}

\begin{quest}[{Resolvable motives in terms of log geometry}]\label{quest:ResMots}
Our original strategy for proving \cref{thm:DeligneCompare} was to define the décalaged pole-order filtration $D_*$ on variants of de Rham cohomology as functors out of either
\begin{enumerate}
\item the category of logarithmic motives $\log\DM_S$ or 
\item the logarithmic motivic homotopy category $\log\SH_S$ \cite{binda:2022,binda:2023}. 
\end{enumerate}
The comparison would then follow from the universal property of weight filtrations of \cref{thm:IndependenceOfWts} by directly comparing them as functors out of resolvable motives. The key observation here is that the symmetric monoidal functor $\MS_S \to \log\SH_S$ can be used to embed resolvable motives into logarithmic motives, and that motives of log schemes associated to projective sncd pairs are dualizable due to the motivic residue sequences of \cite[Theorem~3.2.19]{binda:2023}.

Unfortunately, the strategy does not work because after equipping variants of de Rham cohomology with the $D_*$-filtration they fail to satisfy Nisnevich (or even Zariski) descent. In fact, descent fails already for the standard open cover of $\Pbf^1$ by two affine lines. This failure inspired as to ask the following question: 

\begin{center}
\emph{Is there a simple description of $\DM^\kgl_{S,\res}$  in terms of only smooth log smooth projective $S$-schemes?} 
\end{center}

Note that such a description cannot mention Nisnevich or Zariski topology, as these topologies cannot be described using only projective schemes. Our conjecture is that such a description exists and that it looks something like this:
\begin{enumerate}
    \item start from the category $\Pc$ of spectrum-valued presheaves on smooth log smooth projective $S$-schemes satisfying some conditions (e.g. smooth blowup excision);

    \item define $\mathcal{M}$ by formally inverting an explicit collection of objects in $\Pc$ (e.g. all Thom spaces over $S$);

    \item consider $\Dc := \Mod_{\kgl}(\mathcal{C})$. In particular, $\kgl$ should be representable by a commutative algebra in $\mathcal{M}$.
\end{enumerate}
Then, if Atiyah duality \cite[Corollary~5.15]{AHI:atiyah} holds in $\Dc$, $\DM^\kgl_{S,\res}$ is the smallest stable subcategory of $\Dc$ that contains all the Tate twists of smooth projective varieties (\cref{prop:PureToRes}). Moreover, if the category admits motivic residue sequences that allow expressing the motives of logarithmic schemes in terms of ordinary schemes, then the $\Dc$-motives of smooth log smooth projective $S$-schemes would be contained in $\DM^\kgl_{S,\res}$.

A simple description as described above would be useful in defining functors out of $\DM^\kgl_{S,\res}$ using logarithmic geometry, thus providing greater convenience in constructing invariants of resolvable motives. This idea seems reminiscent of Bittner's presentation of the Grothendieck ring of varieties over fields of characteristic zero whose generators are smooth proper varieties and whose generators come from blowup squares, see \cite{bittner:2004}.
\end{quest}

\begin{quest}[{du Bois theory in mixed characteristic}]
The cohomology theory represented by $W_*\Hod^*(S)$ on singular schemes as in \cref{ex:dB} is reminiscent of the cohomology of the du Bois complex \cite{dubois:1981}. Park and Popa have recently used the dimensions of the cohomology groups of the du Bois complex, the \textit{Hodge-du Bois numbers}, in order to study singularities in characteristic zero \cite{park-popa:hodge_symmetry, park-popa:qfactoriality}. This naturally leads to the following question: 

\begin{center}
\emph{Can the cohomology theories $W_*\dR(S), W_*\mathrm{H}^*\dR(S)$ and $W_*\Hod^*(S)$ be used to study singularities in positive and mixed characteristic?} 
\end{center}

\end{quest}
\appendix

\section{The Whitehead tower functor is fully faithful} 

The following result is folklore, but we could not find a suitable reference: 

\begin{theorem}
\label{theorem:whitehead_miracle}
The Whitehead tower functor
\[
\tau_{\geq \ast} (-) \colon \spectra \rightarrow \Fil^\down (\spectra)
\]
is fully faithful and its essential image is the full subcategory of those filtered spectra $F^{\ast} X$ such that for each $q \in \mathbb{Z}$ the canonical map 
\[
F^{q}X \rightarrow \varinjlim F^{\ast} X
\]
is a $q$-connective cover of spectra 
\end{theorem}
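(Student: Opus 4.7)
My plan is to realize $\tau_{\geq \ast}$ as a fully faithful right adjoint and simultaneously identify its essential image. Let $\mathcal{F}' \subseteq \Fil^\down(\spectra)$ be the full subcategory spanned by filtered spectra $F^\ast$ such that $F^q$ is $q$-connective for every $q \in \mathbb{Z}$; the Whitehead tower factors through $\mathcal{F}'$ tautologically. The central claim is that $\tau_{\geq \ast} \colon \spectra \to \mathcal{F}'$ is right adjoint to the colimit functor $\varinjlim \colon \mathcal{F}' \to \spectra$. I would construct this as the composite of two adjunctions: the constant filtration $c \colon \spectra \to \Fil^\down(\spectra)$ is right adjoint to $\varinjlim$, and the levelwise truncation
\[
\hat\tau \colon \Fil^\down(\spectra) \to \mathcal{F}', \qquad \hat\tau(F)(q) \colonequals \tau_{\geq q}(F^q),
\]
with transition maps $\tau_{\geq q+1}(F^{q+1}) \to \tau_{\geq q}(F^q)$ induced by composing the transition of $F$ with the natural map $\tau_{\geq q+1} \to \tau_{\geq q}$, is right adjoint to the inclusion $\mathcal{F}' \hookrightarrow \Fil^\down(\spectra)$. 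Composing, $\hat\tau \circ c$ recovers $\tau_{\geq \ast}$, and the composite right adjoint is right adjoint to $\varinjlim |_{\mathcal{F}'}$.

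Granting the adjunction, fully faithfulness of $\tau_{\geq \ast}$ reduces to the counit $\varinjlim \tau_{\geq \ast} Y \to Y$ being an equivalence. This is the classical statement that a spectrum is the colimit of its Whitehead tower: the cofiber of $\tau_{\geq q} Y \to Y$ is $\tau_{<q} Y$, which becomes arbitrarily coconnective as $q \to -\infty$, so the colimit vanishes on every homotopy group. For the essential image, the inclusion of $\tau_{\geq \ast}(\spectra)$ into the subcategory $\mathcal{E}$ described in the statement is immediate, since $\tau_{\geq q} Y \to Y \simeq \varinjlim \tau_{\geq \ast} Y$ is the $q$-connective cover by construction. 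Conversely, if $F^\ast$ satisfies the stated condition, then $F^\ast \in \mathcal{F}'$, and the unit $F^\ast \to \tau_{\geq \ast}(\varinjlim F^\ast)$ supplied by the adjunction is at level $q$ the canonical comparison $F^q \to \tau_{\geq q}(\varinjlim F^\ast)$ between two $q$-connective covers of the same spectrum, hence an equivalence.

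The main technical (though formal) step is verifying that $\hat\tau$ is right adjoint to the inclusion $\mathcal{F}' \hookrightarrow \Fil^\down(\spectra)$. The cleanest route is to observe that $\mathcal{F}'$ is presentable — it is the full subcategory of a presentable $\infty$-category cut out by the accessible connectivity conditions $F^q \in \spectra_{\geq q}$ — and that the inclusion preserves colimits, since each $\spectra_{\geq q}$ is closed under colimits in $\spectra$ and colimits in $\Fil^\down(\spectra)$ are computed pointwise. The adjoint functor theorem then supplies a right adjoint, which one identifies with $\hat\tau$ by unwinding the universal property levelwise via the standard adjunction $\spectra_{\geq q} \rightleftarrows \spectra$ and noting that mapping spaces of filtered objects are ends over the twisted arrow category of $\mathbb{Z}^{\op}$.
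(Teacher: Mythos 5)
Your proof is correct and is essentially the paper's argument in different packaging: your subcategory $\mathcal{F}'$ and the levelwise truncation $\hat\tau$ are precisely the connective part and the connective-cover functor of the ``diagonal'' t-structure on $\Fil^\down(\spectra)$ constructed in \cref{proposition:the_existence_of_the_diagonal_t_structure}, and the one nontrivial verification in both proofs is the same end-formula computation over $\Tw(\mathbb{Z}^{\op})$, where the inequality $q_1 \geq q_2$ guarantees that a $q_1$-connective source cannot see the $(q_2-1)$-coconnective part of the target. The only difference is that you deduce full faithfulness from the counit $\varinjlim \tau_{\geq \ast} Y \to Y$ of your composite adjunction (the classical convergence of the Whitehead tower) rather than from the unit $\tau_{\geq \ast} Y \to \mathrm{const}^{\ast}(Y)$ being a diagonal connective cover, which spares you from constructing the coconnective half of the t-structure.
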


Our arguments are analogous to \cite[{Corollary 4.38}]{pstrkagowski2023synthetic}, where instead of the standard t-structure on synthetic spectra we use the following t-structure on the $\infty$-category $\Fil^\down(\spectra) \colonequals \Fun(\mathbb{Z}^{op}, \spectra)$ of filtered spectra: 

\begin{proposition}
\label{proposition:the_existence_of_the_diagonal_t_structure}
Let us say that a filtered spectrum $F^{\ast} X$ is
\begin{enumerate}
    \item \emph{diagonal connective} if $F^{q} X$ is $q$-connective for each $q \in \mathbb{Z}$,
    \item \emph{diagonal coconnective} if $F^{q} X$ is $q$-coconnective for each $q \in \mathbb{Z}$. 
\end{enumerate}
Then the above pair of subcategories defines a t-structure on the $\infty$-category of filtered spectra. 
\end{proposition}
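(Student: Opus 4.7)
The plan is to verify the three axioms of a $t$-structure on the stable $\infty$-category $\Fil^{\down}(\spectra) \simeq \Fun(\Zb^{\op}, \spectra)$ directly. Denote the two proposed subcategories by $\Cc_{\geq 0}$ (diagonal connective) and $\Cc_{\leq 0}$ (diagonal coconnective); both are manifestly closed under retracts.

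First, I check closure under shifts. Because the suspension functor on $\Fil^{\down}(\spectra)$ is computed levelwise, and because suspension on spectra raises connectivity by one, the implication $F^q \in \spectra_{\geq q} \Rightarrow F^q[1] \in \spectra_{\geq q+1} \subseteq \spectra_{\geq q}$ gives $\Cc_{\geq 0}[1] \subseteq \Cc_{\geq 0}$; the dual argument gives $\Cc_{\leq 0}[-1] \subseteq \Cc_{\leq 0}$.

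Second, I verify orthogonality: $\pi_0 \Map(F,G) = 0$ for $F \in \Cc_{\geq 0}$ and $G \in \Cc_{\leq -1} \colonequals \Cc_{\leq 0}[-1]$, the latter condition unwinding to $G^q \in \spectra_{\leq q-1}$. Mapping spectra in $\Fun(\Zb^{\op},\spectra)$ are given by the fiber
\[
\Map(F,G) \simeq \mathrm{fib}\!\left( \prod_{q} \Map(F^q,G^q) \longrightarrow \prod_{q} \Map(F^{q+1},G^q) \right),
\]
and using the standard estimate $\Map(A,B) \in \spectra_{\leq m-n}$ for $A \in \spectra_{\geq n}$, $B \in \spectra_{\leq m}$, the two products lie in $\spectra_{\leq -1}$ and $\spectra_{\leq -2}$ respectively. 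A short long-exact-sequence argument then shows that the fiber still lies in $\spectra_{\leq -1}$, yielding the required vanishing.

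Third, I construct the truncation fibre sequence. Given $F \in \Fil^{\down}(\spectra)$, set $(F_{\geq 0})^q \colonequals \tau_{\geq q} F^q$ and $(F_{\leq -1})^q \colonequals \tau_{\leq q-1} F^q$. The transition maps come from the universal properties of the Postnikov truncations: the composite $\tau_{\geq q+1} F^{q+1} \to F^{q+1} \to F^q$ is $q$-connective and so factors uniquely through $\tau_{\geq q} F^q$, and dually for $F_{\leq -1}$. Since each levelwise fibre sequence $\tau_{\geq q} F^q \to F^q \to \tau_{\leq q-1} F^q$ is exact in $\spectra$ and fibre sequences in $\Fun(\Zb^{\op},\spectra)$ are detected pointwise, this assembles into a fibre sequence $F_{\geq 0} \to F \to F_{\leq -1}$ with $F_{\geq 0} \in \Cc_{\geq 0}$ and $F_{\leq -1} \in \Cc_{\leq -1}$. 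The main technical subtlety lies precisely here: coherently assembling the levelwise truncations into $\infty$-functors $F \mapsto F_{\geq 0}$ and $F \mapsto F_{\leq -1}$. The cleanest route bypasses a hands-on verification by invoking step two --- the mapping-spectrum computation identifies these assignments as, respectively, the right and left adjoints to the inclusions of $\Cc_{\geq 0}$ and $\Cc_{\leq -1}$ into $\Fil^{\down}(\spectra)$, so their existence and $\infty$-functoriality then follow formally from adjoint functor theorems, and the pointwise formulae above compute them.
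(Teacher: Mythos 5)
Your proof is correct, but it takes a genuinely different route from the paper's on both of its substantive steps. For orthogonality, the paper computes $\Map_{\Fil^\down(\spectra)}(F,G)$ as a limit over the twisted arrow category $\Tw(\Zb^{\op})$ via the end formula of Gepner--Haugseng--Nikolaus and observes that each term $\Map(F^{q_1}X, F^{q_2}Y)$ with $q_1 \geq q_2$ is already discrete; you instead use the two-term fiber presentation of the mapping spectrum (valid because $\Zb^{\op}$ is free on a graph) plus a connectivity estimate on the fiber. Both are fine; yours is arguably more elementary. For the truncation, the paper sidesteps the coherence problem you (rightly) flag by a slick trick: apply the already-functorial Whitehead tower $\tau_{\geq \ast}\colon \spectra \to \Fil^\down(\spectra)$ levelwise to get a bifiltered spectrum and restrict along the diagonal of $\Zb^{\op} \times \Zb^{\op}$; this produces the coherent diagram $q \mapsto \tau_{\geq q}F^q$ in one stroke, with the correct cofibre. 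Your alternative via the adjoint functor theorem does work, but note it is not quite free of content: you need $\Cc_{\geq 0}$ to be presentable (e.g.\ as $\bigcap_q \ev_q^{-1}(\spectra_{\geq q})$, an intersection of accessible subcategories closed under colimits), and the claim that ``the pointwise formulae compute'' the coreflection requires an argument --- for instance, that the left Kan extension of a $q$-connective spectrum from $\{q\} \hookrightarrow \Zb^{\op}$ lands in $\Cc_{\geq 0}$, whence $\ev_q$ of the counit is a $q$-connective cover by adjunction. Since the coherence of the levelwise truncation is the only real content of the truncation axiom here, the paper's diagonal construction is the more economical solution, and you may want to adopt it.
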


\begin{proof}
Consider the bifiltered spectrum
\[
\tau_{\geq \ast} F^{\ast} Z \colon \mathbb{Z}^{op} \times \mathbb{Z}^{op} \rightarrow \spectra
\]
given by applying the Whitehead tower functor level-wise. Composing with the diagonal map of $\mathbb{Z}^{op}$ we obtain a filtered spectrum $\mathrm{diag}^{*}(\tau_{\geq \ast} F^{\ast} Z)$ which in filtered degree $q$ is given by $\tau_{\geq q} F_{q} Z$. Thus, this filtered spectrum is diagonal $0$-connective and the cofibre of the canonical map 
\[
\mathrm{diag}^{*}(\tau_{\geq \ast} F^{\ast} Z) \rightarrow F^{\ast} Z 
\]
is diagonal $(-1)$-coconnective. 

Thus, to show that the two notions define a t-structure, we are left with verifying that if $F^{\ast} X$ is diagonal $0$-connective and $F^{\ast} Y$ is diagonal $0$-coconnective, then the mapping space $\Map(F^{\ast} X, F^{\ast} Y)$ is discrete. However, by \cite[{Proposition 5.1}]{gepner2017lax} this mapping space can be calculated using the end formula; that is, we have 
\[
\Map(F^{\ast} X, F^{\ast} Y) \simeq \varprojlim \Map_{\spectra}(F^{q_{1}} X, F^{q_{2}} Y)
\]
where the limit is taken over $\mathrm{Tw}(\mathbb{Z}^{op})$, the twisted arrow $\infty$-category. Since the objects of the twisted arrow $\infty$-category are arrows in $\mathbb{Z}^{op}$, we can identify them with pairs $q_{1}, q_{2}$ of integers such that $q_{1} \geq q_{2}$. In this case, $F^{q_{1}} X$ is $q_{1}$-connective and $F^{q_{2}} Y$ is $q_{2}$-coconnective by assumption, so under the assumption that $q_{1} \geq q_{2}$, the relevant mapping space is discrete. Thus, the limit is discrete as well, as needed. 
\end{proof}

\begin{proof}[{Proof of \cref{theorem:whitehead_miracle}}]
We have to show that if $X, Y$ are spectra, then the canonical morphism  
\[
\Map_{\spectra}(X, Y) \rightarrow \Map_{\Fil^\down(\spectra)}(\tau_{\geq \ast} X, \tau_{\geq \ast} Y) 
\]
between mapping spaces is an equivalence. Since the composite 
\[
\begin{tikzcd}
	{\spectra} & {\Fil^\down(\spectra)} & {\spectra}
	\arrow["{\tau_{\geq \ast}}", from=1-1, to=1-2]
	\arrow["{\varinjlim }", from=1-2, to=1-3]
\end{tikzcd}
\]
can be identified with the identity of $\spectra$, we can equivalently show that 
\[
\Map_{\Fil^\down(\spectra)}(F^{\ast} X, F^{\ast} Y) \rightarrow \Map_{\spectra}(\varinjlim F^{\ast} X, \varinjlim F^{\ast} Y).
\]
is an equivalence. Since taking colimits is a left adjoint to the constant filtration functor, the claim is equivalent to showing that the morphism
\begin{equation}
\label{equation:equivalence_induced_by_unit_for_whitehead_towers}
\Map_{\Fil^\down(\spectra)}(\tau_{\geq \ast} X, \tau_{\geq \ast} Y) \rightarrow \Map_{\Fil^\down(\spectra)}(\tau_{\geq \ast} X, \mathrm{const}^{\ast}(Y))
\end{equation}
induced by the unit $\tau_{\geq \ast} Y \rightarrow \mathrm{const}^{\ast}(Y)$, where we implicitly identify $Y \simeq \varinjlim (\tau_{\geq \ast} Y)$, is an equivalence. This follows from the fact that $\tau_{\geq \ast} X$ is connective and the unit is a connective cover in the t-structure of \cref{proposition:the_existence_of_the_diagonal_t_structure}. This ends the argument that the Whitehead tower functor is fully faithful.

To identify the essential image, suppose that $F^{\ast} X$ is a filtered spectrum such that for each $q \in \mathbb{Z}$, the map $F^{q} X \rightarrow \varinjlim F^{\ast} X$ is a $q$-connective cover of spectra. Then both $F^{\ast} X$ and $\tau_{\geq \ast} (\varinjlim F^{\ast} X)$ can be identified with connective covers of $\mathrm{const}_{\ast}(\varinjlim F^{\ast} X)$ with respect to the t-structure of \cref{proposition:the_existence_of_the_diagonal_t_structure} and so they are equivalent. Thus, $F^{\ast} X$ is in the essential image. 
\end{proof}

\section{Bivariant pairings and dual functors} 

The following result is useful for recognizing functors of opposite variance as duals of one another by using pairings that are functorial with respect to the twisted arrow category. Although the statement is certainly known to experts, we are not aware of any citable reference. 

\begin{theorem}
\label{prop:functorial_duals}
Let $K$ be an $\infty$-category, $\Cc$ a symmetric monoidal $\infty$-category, let $F \colon K^\op \to \Cc$ and $G \colon K \to \Cc$ be functors, and suppose that $G$ takes values in dualizable objects. Let $A$ be the composition 
\[
\Tw(K) \to K^\op \times K \xto{F \otimes G} \Cc,
\]
and let $\mathrm{const}_{\mathbf{1}} \colon \Tw(K) \to \Cc$ be the constant, unit-valued functor to $\Cc$. Then, a natural transformation of the form 
\[
P \colon A \to \mathrm{const}_{\mathbf{1}} 
\]
determines and is determined by a natural transformation
\[
\widetilde{P} \colon F \to G^\vee.
\]
Furthermore, $P$ is a perfect pairing on identity morphisms if and only if the associated $\widetilde{P}$ is an equivalence. 
\end{theorem}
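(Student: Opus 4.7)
The plan is to identify both sides of the claimed equivalence with mapping spaces out of the coend $\int^{k} F(k) \otimes G(k)$. For the left-hand side, since $\mathrm{const}_{\mathbf{1}}$ is constant, I would first use the universal property of colimits to obtain
\[
\Map_{\Fun(\Tw(K), \Cc)}(A, \mathrm{const}_{\mathbf{1}}) \simeq \Map_\Cc\bigl(\colim_{\Tw(K)} A,\, \mathbf{1}\bigr),
\]
and then invoke the standard presentation of coends via the twisted arrow category to identify $\colim_{\Tw(K)} A$ with $\int^{k \in K} F(k) \otimes G(k)$.

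For the right-hand side, I would apply the end formula for natural transformations,
\[
\Map_{\Fun(K^\op, \Cc)}(F, G^\vee) \simeq \int_{k \in K} \Map_\Cc(F(k), G(k)^\vee),
\]
and then use the pointwise duality adjunction $\Map_\Cc(X, Y^\vee) \simeq \Map_\Cc(X \otimes Y, \mathbf{1})$, available since each $G(k)$ is dualizable, to rewrite the right-hand side as $\int_k \Map_\Cc(F(k) \otimes G(k), \mathbf{1})$. Since $\Map_\Cc(-, \mathbf{1})$ sends colimits to limits, this end equals $\Map_\Cc\bigl(\int^k F(k) \otimes G(k),\, \mathbf{1}\bigr)$, matching the left-hand side and giving the first claim. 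For the second claim, I would unwind the constructed equivalence at the identity object $\mathrm{id}_k \in \Tw(K)$ to see that the component $\widetilde{P}_k$ is the mate of $P_{\mathrm{id}_k}$ under the duality adjunction; combined with the pointwise criterion for equivalences of natural transformations, the claim follows from the observation that $P_{\mathrm{id}_k}$ is a perfect pairing precisely when its mate $\widetilde{P}_k$ is an equivalence.

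The main obstacle to executing this plan is the functoriality of dualization: I need the rewriting $\Map_\Cc(F(k), G(k)^\vee) \simeq \Map_\Cc(F(k) \otimes G(k), \mathbf{1})$ to be natural in $k$, i.e., to lift to an equivalence of functors on $K^\op \times K$. The cleanest route is to invoke that taking duals defines an $\infty$-functor on the full subcategory of $\Cc$ spanned by dualizable objects, a standard (if nontrivial) fact about symmetric monoidal $\infty$-categories; setting $G^\vee \colonequals (-)^\vee \circ G$ then gives the needed functor $K^\op \to \Cc$, and the counit of the duality adjunction supplies the naturality of the rewriting.
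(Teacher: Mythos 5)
Your proposal is correct and is essentially the paper's own argument: both reduce the claim to the adjunction between colimits and constant diagrams together with the end formula for natural transformations over $\Tw(K)$ (the paper cites \cite[Proposition~5.1]{gepner2017lax} for this) and the pointwise duality adjunction, your coend/end phrasing being only a cosmetic repackaging. Two minor remarks: you are right to flag the functoriality of $(-)^\vee$ on dualizable objects, a point the paper elides, while conversely the paper is slightly more careful than you about the existence of $\colim_{\Tw(K)} A$, which it ensures by passing to presheaves on $\Cc$ before taking the colimit.
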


\begin{proof}
We claim that to give a natural transformation into the constant diagram is the same as to give a point in the limit of the functor $\Tw(K) \rightarrow \spaces$ given by 
\begin{equation}
\label{equation:space_of_perfect_pairings}
(a \to b) \mapsto \Map_{\Cc}(F(a) \otimes G(b), \mathbf{1}), 
\end{equation}
where $a \to b$ is a morphism in $K^{\op}$. To see this, note that by replacing $\ccat$ by its $\infty$-category of presheaves, we can assume that it has colimits. Then, since mapping into the constant diagram is right adjoint to taking the colimit, we have that 
\[
\mathrm{Nat}(P, \mathrm{const}_{\mathbf{1}}) \simeq \Map_{\ccat}(\varinjlim F(a) \otimes G(b), \mathbf{1}) \simeq \varprojlim \Map_{\ccat}(F(a) \otimes G(b), \mathbf{1})
\]
as needed. Since $G$ is assumed to take values in dualizable objects, we can rewrite the functor of (\ref{equation:space_of_perfect_pairings}) as 
\[
(a \to b) \mapsto \Map_{\Cc}(F(a) , G(b)^\vee).
\]
By \cite[Proposition~5.1]{gepner2017lax}, the limit of this diagram can be identified with the space of natural transformations $F \to G^\vee$, with the values at the identities giving the components of the natural transformation. This gives the last part, since to say that $P$ is a perfect pairing on the identity $\mathrm{id}_{a}$ corresponds to the condition that $F(a) \to G(a)^\vee$ is an equivalence. 
\end{proof}

\bibliographystyle{alphamod}
\bibliography{references}

\end{document}